\documentclass[final]{siamltex}
\usepackage{latexsym,amssymb,amsmath,amsfonts}%,amsthm}
\usepackage{mathrsfs}
\usepackage{graphicx}
%
%\makeatletter
%\topmargin =0mm \headheight=0mm \headsep=0mm
%\textheight =220mm \textwidth =160mm
%\oddsidemargin=0mm\evensidemargin =0mm
%
\sloppy \brokenpenalty=10000

\newcommand{\R}{{\mathbb R}}

\newcommand{\N}{{\mathbb N}}
\newcommand{\C}{{\mathbb C}}

\newcommand{\be}{\begin{eqnarray}}
\newcommand{\ben}{\begin{eqnarray*}}
\newcommand{\en}{\end{eqnarray}}
\newcommand{\enn}{\end{eqnarray*}}
\newcommand{\ba}{\backslash}
\newcommand{\pa}{\partial}

\newcommand{\ov}{\overline}
\newcommand{\curl}{{\rm curl\,}}
\newcommand{\curlcurl}{{\rm curlcurl\,}}
\newcommand{\Div}{{\rm Div\,}}

\newcommand{\G}{\Gamma}
\newcommand{\eps}{\epsilon}

\newcommand{\Om}{\Omega}
\newcommand{\om}{\omega}
\newcommand{\sig}{\sigma}

\newcommand{\al}{\alpha}
\newcommand{\la}{\lambda}

\newcommand{\wi}{\widetilde}

\newtheorem{remark}[theorem]{Remark}

\begin{document}
\renewcommand{\theequation}{\arabic{section}.\arabic{equation}}
\title{\bf The inverse electromagnetic scattering problem in a piecewise homogeneous medium}
\author{Xiaodong Liu\thanks{LSEC and Institute of Applied Mathematics, AMSS,
Chinese Academy of Sciences,Beijing 100190, China ({\tt
lxd230@163.com}).}
\and Bo Zhang\thanks{LSEC and Institute of Applied
Mathematics, AMSS, Chinese Academy of Sciences, Beijing 100190,
China ({\tt b.zhang@amt.ac.cn}).}
\and Jiaqing Yang\thanks{LSEC and
Institute of Applied Mathematics, AMSS, Chinese Academy of Sciences,
Beijing 100190, China ({\tt jiaqingyang@amss.ac.cn}).}
}

\maketitle

\begin{abstract}
This paper is concerned with the problem of scattering of
time-harmonic electromagnetic waves from an impenetrable obstacle in
a piecewise homogeneous medium. The well-posedness of the direct
problem is established, employing the integral equation method.
Inspired by a novel idea developed by H\"{a}hner \cite{Ha}, we prove
that the penetrable interface between layers can be uniquely
determined from a knowledge of the electric far field pattern for
incident plane waves. Then, using the idea developed by Liu \&
Zhang \cite{LZip}, a new mixed reciprocity relation is obtained and
used to show that the impenetrable obstacle with its physical
property can also be recovered. Note that the wave numbers in the
corresponding medium may be different and therefore this work can be
considered as a generalization of the uniqueness result of \cite{LZaa}.

\vspace{.2in}
\begin{keywords}
Uniqueness, piecewise homogeneous medium, Holmgren's uniqueness
theorem, Green's vector theorem, inverse electromagnetic scattering.
\end{keywords}

\begin{AMS}
35P25, 35R30
\end{AMS}

\end{abstract}

\pagestyle{myheadings}
\thispagestyle{plain}
 \markboth{X. LIU, B. ZHANG \& J. YANG}{Inverse scattering in a piecewise homogeneous medium}

\section{Introduction}
\setcounter{equation}{0}

We consider the scattering of time-harmonic electromagnetic plane
waves with frequency $\om>0$ by an impenetrable obstacle which is
embedded in a piecewise homogeneous medium. For simplicity, and
without loss of generality, in this paper we restrict ourself to the
case where the obstacle is buried in a two-layered piecewise
homogeneous medium, as shown in Figure \ref{fig1}.
Note that our method and results can be easily extended to the multi-layered case.
Precisely, let $\Om_{2}\subset\R^3$ denote the impenetrable obstacle which is an
open bounded region with a $C^{2}$ boundary $S_{1}$ and let
$\R^3\ba\ov{\Om_{2}}$ denote the the background medium which is
divided by means of a closed $C^{2,\al}\;(0<\al<1)$ surface $S_{0}$
into two connected domains $\Om_{0}$ and $\Om_{1}$. Let $\Om$ denote
the complement of $\Om_{0}$, that is, $\Om:=\R^{3}\ba\ov{\Om_0}$. We
assume that the boundary $S_{1}$ of the obstacle $\Om_{2}$ has a
dissection $S_{1}=\ov{\G}_{1}\cup\ov{\G}_{2}$, where $\G_{1}$ and
$\G_{2}$ are two disjoint, relatively open subsets of $S_{1}$.

The electromagnetic properties of the homogeneous medium in
$\Om_{0}$ are described by space independent electric permittivity
$\eps_{0}>0$, magnetic permeability $\mu_{0}>0$ and vanishing
electric conductivity $\sig_{0}=0$. The electromagnetic properties
of the homogeneous medium in $\Om_{1}$ are determined by space
independent electric permittivity $\eps_{1}>0$, magnetic
permeability $\mu_{1}>0$ and electric conductivity $\sig_{1}\geq0$.
We define the wave number $k_j$ in the corresponding medium
$\Om_{j}$ by $k_{j}^{2}=(\eps_{j}+i\sig_{j}/\om)\mu_{j}\om^{2}$ with
$\Re k_{j}>0,\Im k_{j}\geq0\; (j=0,1)$.
%Choose a large ball $B_{R}$ centered at the origin such that $\ov{\Om}\subset B_{R}$
%and let $\Om_{R}:=B_{R}\ba\ov{\Om}$.

\begin{figure}[htbp]
\centering \scalebox{0.38}{\includegraphics{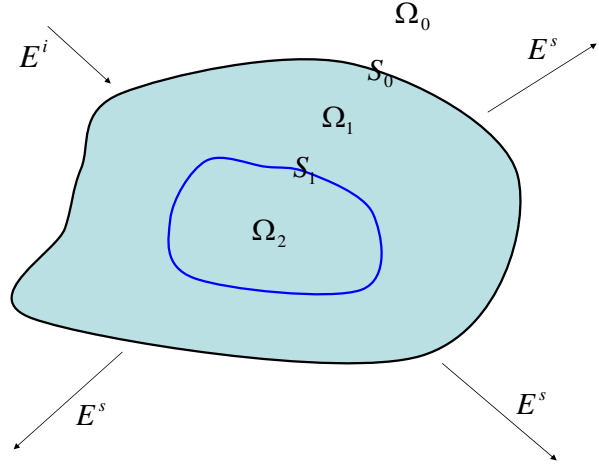}}
\caption{Scattering in a two-layered background medium}\label{fig1}
\end{figure}

For convenience we denote by $T(S_{j})$ and
$T^{0,\al}(S_{j})\;(j=0,1)$, $0<\al<1$, the spaces of all continuous
and uniformly H\"{o}lder continuous tangential fields equipped with
the supremum norm and the H\"{o}lder norm, respectively. Then we introduce the normed spaces
of tangential fields possessing a surface divergence (see section 6.3 in \cite{CK}) by
 \ben
 T_{d}(S_{j})&:=&\{a\in T(S_{j}): \Div a\in C(S_{j})\},\\
 T_{d}^{0,\al}(S_{j})&:=&\{a\in T^{0,\al}(S_{j}): \Div a\in C^{0,\al}(S_{j})\}.
 \enn
Let $T^{2}(S_{j})$ denote the completion of
$T(S_{j})$ with respect to the usual $L^{2}-$norm and
$T_{d}^{2}(S_{j})$ denote the tangential fields in $T^{2}(S_{j})$
with a surface divergence in $L^{2}(S_{j})\; (j=0,1)$. We denote by
$\nu(x)$ the unit normal vector to a surface at the point $x$. For a
closed surface it is directed into the exterior of the surface. For
two vectors $a, b \in \C^{3}$ we write $a\cdot b$ for the scalar
product and $a\times b$ for the vector product.

Given four tangential fields $T_{1}, T_{2}\in T_{d}^{0,\al}(S_{0}),
T_{3}\in T_{d}^{0,\al}(\G_{1})$ and $T_{4}\in T^{0,\al}(\G_{2})$,
the {\bf\em direct problem} consists in finding a solution $E,H\in
C^{1}(\Om_{0})\cap C(\ov{\Om_{0}}),\;F,G\in C^{1}(\Om_{1})\cap
C(\ov{\Om_{1}})$ to the Maxwell equations
 \be
 \label{0EH} \curl E-ik_{0}H=0,\qquad\curl H+ik_{0}E=0\qquad \mbox{in}\;\; \Om_{0},\\
 \label{0FG} \curl F-ik_{1}G=0,\qquad\curl G+ik_{1}F=0\qquad \mbox{in}\;\; \Om_{1},
 \en
which satisfies the Silver-M\"{u}ller radiation condition
 \be
 \label{0SMrc} \lim_{r\rightarrow\infty}(H\times x-r E)=0
 \en
where $r=|x|$ and the limit holds uniformly in all directions $x/r$,
the transmission boundary conditions
 \be
 \label{0tr}
\nu\times E-\la_{E}\nu\times F=T_{1},\qquad\nu\times
H-\la_{H}\nu\times G=T_{2} \qquad\mbox{on}\;\;S_0
 \en
with constants $\la_{E}$ and $\la_{H}$ given by
$\la_{E}=\sqrt{\eps_{0}/(\eps_{1}+i\sig_{1}/\om)}$,
$\la_{H}=\sqrt{\mu_{0}/\mu_{1}}$, and the boundary condition
 \ben
 \label{0bc}\mathscr{B}(F)=0\qquad \mbox{on}\; S_{1}
 \enn
with the operator $\mathscr{B}$ depending on the nature of the
obstacle $\Om_{2}$. Precisely, the boundary condition on $S_{1}$ is
understood as:
 \be
 \label{0pbc}\nu\times F=T_{3}  \qquad&&    \mbox{on}\; \G_{1},\\
 \label{0ibc}\nu\times G-\frac{\la}{k_{1}}(\nu\times F)\times\nu
                    =T_{4}\qquad &&\mbox{on}\; \G_{2}.
 \en
with a positive constant $\la$. Note that the case
$\G_{2}=\emptyset$ corresponds to a {\em perfectly conducting}
obstacle and the case $\G_{1}=\emptyset$ leads to an impedance
boundary condition corresponding to an obstacle which is not
perfectly conducting but does not allow the electromagnetic wave to
penetrate deeply into the obstacle. The scattering problems with
mixed boundary conditions widely occur in practical applications,
e.g., in the use of electromagnetic waves to detect "hostile"
objects where the boundary, or more generally a portion of the
boundary, is coated with an unknown material in order to avoid
detection. We refer to \cite{CC,CCM,CCM05} for the physical
relevance and practical implication of the electromagnetic
scattering by obstacles with a mixed boundary condition in a
homogeneous medium.

In the next section, an integral equation method is employed to
establish the well-posedness of the direct problem.
A mixed reciprocity relation will also be proved. These results will play
an important role in the proof of the uniqueness results in the
inverse problem.

The radiation condition (\ref{0SMrc}) ensures uniqueness of
solutions to the exterior boundary value problem and leads to an
asymptotic behavior of the form
 \be
 \label{0ab}
E(x)=\frac{e^{ik_0|x|}}{|x|}\left\{E^\infty(\widehat{x})+O(\frac{1}{|x|})\right\},
\qquad\mbox{as}\;\;|x|\rightarrow\infty
 \en
uniformly in all directions $\widehat{x}={x}/{|x|}$, where the
vector field $E^{\infty}$ defined on the unit sphere $S^{2}$ is
known as the electric far field pattern.

We consider the scattering of electromagnetic plane waves
 \ben\label{pw}
E^i(x,d,q)&=&\frac{i}{k_{0}}\curl\curl qe^{ik_{0}x\cdot d}
   =ik_{0}(d\times q)\times de^{ik_{0}x\cdot d},\\
H^i(x,d,q)&=&\curl qe^{ik_{0}x\cdot d}=ik_{0}d\times
   qe^{ik_{0}x\cdot d}
 \enn
where the unit vector $d$ describes the direction of propagation and
the constant vector $q$ gives the polarization. Then we have
$T_{1}=-\nu\times E^i(x,d,q), T_{2}=-\nu\times H^i(x,d,q)$ on
$S_{0}$, $T_{3}=0$ on $\G_{1}$ and $T_{4}=0$ on $\G_{2}$ in the
boundary problem (\ref{0EH})-(\ref{0ibc}). Throughout this paper, we
will indicate the dependence of the corresponding scattered field,
total field and far field pattern on the incident direction $d$ and
the polarization $q$ by writing $E^{s}(\cdot,d,q),
H^{s}(\cdot,d,q)$, $E(\cdot,d,q), H(\cdot,d,q)$ and
$E^{\infty}(\cdot,d,q), H^{\infty}(\cdot,d,q)$, respectively.

The {\bf\em inverse problem} we consider in this paper is, given the
wave numbers $k_{j}\;(j=0,1)$, the constants $\la_{E}$ and $\la_{H}$
and the electric far field pattern $E^{\infty}(\widehat{x},d,q)$ for
all observation directions $\widehat{x}\in S^{2}$, all incident
directions $d\in S^{2}$ and all polarizations $q\in \R^{3}$, to
determine the interface $S_{0}$ and the obstacle $\Om_{2}$ with its
physical property $\mathscr{B}$. As usual in most of the inverse
problems, the first question to ask in this context is the
identifiability, i.e. whether an obstacle can be identified from
knowledge of its far-field pattern. Mathematically, the
identifiability is the uniqueness issue which is of theoretical
interest and is required in order to proceed to efficient numerical
methods of solutions.

For scattering problems in a homogeneous medium, there has been
an extensive study in the literature; see, e.g., uniqueness results
for scattering from a perfect conductor by Colton \& Kress \cite{CK},
for scattering from an impenetrable obstacle with the boundary condition
(\ref{0ibc}) by Kress \cite{Kress}, for scattering from a penetrable
obstacle with transmission boundary conditions by H\"{a}hner \cite{Ha} and
for scattering from a penetrable obstacle with
conductivity boundary conditions by Hettlich \cite{Hettlich96}.
However, few results are available for the case of a piecewise homogeneous
background medium. For the scalar Helmholtz equation, uniqueness
results have been investigated in our recent paper \cite{LZsiam}.
Motivated by Isakov's paper \cite{Isakov90}, Kirsch \& Kress
\cite{KK93} gave another proof of the unique determination of a penetrable
obstacle in a homogeneous medium for the scalar transmission
problem. Based on their ideas, H\"{a}hner proved the unique
determination of the penetrable obstacle in the electromagnetic scattering
using a novel method. In this paper, we will use H\"{a}hner's method to prove
that the interface $S_{0}$ is uniquely determined from the electric far field
patterns in Section \ref{sec3}. Under the condition that the wave
numbers in the innermost and outermost homogeneous layers coincide,
that is, $k_{0}=k_{1}$ in the Maxwell equations (\ref{0EH})-(\ref{0FG})
and $S_{0}$ is known in advance, we have proved in \cite{LZaa} that the obstacle
with its physical property can be uniquely determined.
A main tool used and established in \cite{LZaa} is a mixed reciprocity relation
which is important in the proof of the uniqueness result.
In this paper, we establish a modified mixed reciprocity relation
(cf. Lemma 2.5 in \cite{LZip} for the acoustic scattering).
Based on this result, we will prove the unique determination of the obstacle $\Om_{2}$
and its physical property $\mathscr{B}$ in Section \ref{sec4}.
%%%%%%%%%%%%%%%%%%%%%%%%%%%%%%%%%%%%%%%%%%%%%%%%%%%%%%%%%%%%%%%%%%%%%%%%%%%%%%%%%%%%%%%%%
%%%%%%%%%%%%%%%%%%%%%%%%%%%%%%%%%%%%%%%%%%%%%%%%%%%%%%%%%%%%%%%%%%%%%%%%%%%%%%%%%%%%%%%%%

\section{The direct scattering problem}\label{sec2}
\setcounter{equation}{0}

We first show that the direct scattering problem has a unique
solution.

\begin{theorem}\label{uni.direct}
The boundary value problem $(\ref{0EH})-(\ref{0ibc})$ admits at most
one solution.
\end{theorem}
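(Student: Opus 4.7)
By linearity the difference of two solutions satisfies the homogeneous problem ($T_1=T_2=T_3=T_4=0$), and I must show it vanishes. The argument is a Rellich--Holmgren scheme adapted to the transmission interface $S_0$ and the mixed boundary $S_1$. The starting point is Green's vector theorem, $\int_V(\ov{H}\cdot\curl E-E\cdot\curl\ov{H})\,dx=\int_{\partial V}\nu_{\mathrm{out}}\cdot(E\times\ov{H})\,ds$, applied on the truncated exterior $\Om_0\cap B_R$ and separately on $\Om_1$, using $\curl E=ik_0 H$ in $\Om_0$ and $\curl F=ik_1 G$, $\curl G=-ik_1 F$ in $\Om_1$. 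This produces two identities relating boundary integrals on $S_R,S_0,S_1$ to volume integrals weighted by $k_0$ and by $k_1,\ov{k_1}$.

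Next I would eliminate the $S_0$-contributions by matching across the interface. The homogeneous transmission conditions $\nu\times E=\la_E\,\nu\times F$ and $\nu\times H=\la_H\,\nu\times G$, together with the algebraic identity $(\nu\times A)\cdot\ov{B}=-A\cdot\ov{\nu\times B}$, give $(\nu\times E)\cdot\ov{H}=\la_E\la_H(\nu\times F)\cdot\ov{G}$ on $S_0$; a short computation from the definitions shows $\la_E\la_H=k_0/k_1$. Multiplying the $\Om_1$-identity by $\la_E\la_H$ and adding it to the $\Om_0$-identity therefore cancels the $S_0$-integrals. On $S_1$ the condition on $\G_1$ kills the integrand, while on $\G_2$ the impedance condition converts $(\nu\times F)\cdot\ov{G}$ into $-(\la/\ov{k_1})|F_T|^2$, where $F_T$ is the tangential trace of $F$. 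Taking the real part of the combined identity and sending $R\to\infty$, the usual consequence of the Silver-M\"{u}ller condition turns the $S_R$-term into $\int_{S^2}|E^\infty|^2\,d\om\ge 0$; the $\G_2$-term becomes $(\la k_0/|k_1|^2)\int_{\G_2}|F_T|^2\,ds\ge 0$; and the combined volume term becomes $-2k_0(\Re k_1)(\Im k_1)/|k_1|^2\int_{\Om_1}|F|^2\,dx\le 0$. A nonnegative quantity equal to a nonpositive one forces each piece to vanish, yielding $E^\infty\equiv 0$ on $S^2$ and $F_T\equiv 0$ on $\G_2$ (and $F\equiv 0$ in $\Om_1$ whenever $\Im k_1>0$).

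Rellich's lemma then gives $E=H\equiv 0$ in $\Om_0$, and the homogeneous transmission conditions propagate this as $\nu\times F=\nu\times G=0$ on $S_0$. Since each component of $F$ satisfies $\Delta F+k_1^2 F=0$ in $\Om_1$ with $\mathrm{div}\,F=0$, the vanishing Cauchy data on the smooth interface $S_0$ together with Holmgren's uniqueness theorem gives $F\equiv 0$ in $\Om_1$, whence $G\equiv 0$ as well. The principal obstacle is the complex-arithmetic bookkeeping in the matching step: the identity $\la_E\la_H=k_0/k_1$ is exactly what simultaneously cancels the $S_0$-boundary terms and, after taking real parts, aligns the three remaining contributions (far-field energy, impedance dissipation on $\G_2$, and absorption in $\Om_1$) with signs definite enough to force the desired vanishing. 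Any mismatch in that factor would leave unsigned cross terms and could obstruct the conclusion, so \emph{finding} this algebraic relation between $\la_E,\la_H$ and the wavenumbers is the conceptual heart of the argument.
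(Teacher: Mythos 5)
Your proposal is correct and follows essentially the same route as the paper: Green's vector theorem combined with the transmission conditions and the identity $\la_E\la_H=k_0/k_1$ to reduce everything to a sign-definite identity, then Rellich's lemma in $\Om_0$ and Holmgren's theorem to propagate the vanishing into $\Om_1$. The only cosmetic difference is that you unpack the Rellich step via the truncated domain and the far-field energy integral, whereas the paper applies the lemma directly from $\Re\int_{S_0}\nu\times E\cdot\ov{H}\,ds\le 0$.
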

\begin{proof}
Clearly, it is enough to show that $E=H=0$ in $\Om_{0}$, $F=G=0$ in
$\Om_1$ for the corresponding homogeneous problem, that is,
$T_{1}=T_{2}=0$ on $S_{0}$, $T_{3}=0$ on $\G_{1}$ and $T_{4}=0$ on
$\G_{2}$. Using Green's vector theorem, we have
 \be\label{uni1}
\int_{S_{0}}\nu\times E\cdot \ov{H}ds &=&\int_{S_{0}}\nu\times
E\cdot [(\nu\times\ov{H})\times\nu]ds\cr
&=&\la_{E}\la_{H}\int_{S_{0}}\nu\times F\cdot
[(\nu\times\ov{G})\times\nu]ds\cr
&=&\la_{E}\la_{H}\int_{S_{0}}\nu\times F\cdot\ov{G}ds\cr
&=&\la_{E}\la_{H}\int_{S_{1}}\nu\times F\cdot\ov{G}ds
  +\la_{E}\la_{H}\int_{\Om_{1}}(\curl F\cdot\ov{G}-F\cdot\curl\ov{G})dx\cr
&=&-\frac{\la_{E}\la_{H}}{\ov{k_{1}}}\int_{\G_{2}}\la|\nu\times
 F|^{2}ds+i\la_{E}\la_{H}\int_{\Om_{1}}(k_{1}|G|^{2}-\ov{k_{1}}|F|^{2})dx
 \en
where we have used the transmission boundary conditions (\ref{0tr})
in the second equality, the Maxwell equations
(\ref{0EH})-(\ref{0FG}) and the boundary conditions
(\ref{0pbc})-(\ref{0ibc}) in the fifth equality. Taking the real
part of (\ref{uni1}), we have, on noting that
$\la_{E}\la_{H}=k_{0}/k_{1}$, $\Re k_{1}>0$, $\Im k_{1}\geq0$ and
$\la>0$ that
 \ben
\Re\int_{S_{0}}\nu\times E\cdot\ov{H}ds
=-\frac{k_{0}}{|k_{1}|^{2}}\int_{\G_{2}}\la|\nu\times F|^{2}ds
 -\frac{2k_{0}\Re k_{1}\Im k_{1}}{|k_{1}|^{2}}\int_{\Om_{1}}|F|^{2}ds\leq0.
 \enn
Therefore, by Rellich's lemma \cite{CK}, it follows that $E=H=0$ in
$\Om_{0}$. The transmission boundary conditions (\ref{0tr}) and
Holmgren's uniqueness theorem \cite{Kress001} imply that $F=G=0$ in
$\Om_{1}$, which completes the proof.
\end{proof}

Denote by $\Phi_{j}$ the fundamental solution of the Helmholtz
equation with wave number $k_{j}\;(j=0,1)$, which is given by
 \be\label{Phidef}
 \Phi_{j}(x,y)=\frac{e^{ik_{j}|x-y|}}{4\pi |x-y|}, \qquad \ x,y\in \R^{3},x\neq y.
 \en
For convenience, let $\G_{0}$ denote the interface $S_{0}$. Given two
integrable vector fields $a$ on $S_{0}$ and $b$ on $\G_{j}$ and an
integral function $\psi$ on $\G_{j},\;(j=1,2)$, we introduce the
integral operators $M_{i,j}$ and $N_{i,j}$ , respectively, by
 \ben
 (M_{i,j}a)(x)=2\int_{S_{0}}\nu(x)\times\curl_{x}\{a(y)\Phi_{j}(x,y)\}ds(y), \qquad x\in\;\G_{i}\\
 (N_{i,j}a)(x)=2\nu(x)\times\curl\curl\int_{S_{0}}\nu(y)\times a(y)\Phi_{j}(x,y)ds(y),
                                                              \qquad x\in\;\G_{i}
 \enn
for $i=0,1,2; j=0,1$, the operators $\wi{M}_{i,j}$ and
$\wi{N}_{i,j}$, respectively, by
 \ben
 (\wi{M}_{i,j}b)(x)=2\int_{\G_{j}}\nu(x)\times\curl_{x}\{b(y)\Phi_{1}(x,y)\}ds(y), \qquad x\in\;\G_{i}\\
 (\wi{N}_{i,j}b)(x)=2\nu(x)\times\curl\curl\int_{\G_{j}}\nu(y)\times b(y)\Phi_{1}(x,y)ds(y),
                                                              \qquad x\in\;\G_{i}
 \enn
for $i=0,1,2, j=1,2$ and the operators $\wi{S}_{i,j}$ and
$\wi{K}_{i,j}$, respectively, by
 \ben
 (\wi{S}_{i,j}\psi)(x)=2\int_{\G_{j}}\Phi_{1}(x,y)\psi(y)ds(y), \qquad x\in\;\G_{i}\\
 (\wi{K}_{i,j}\psi)(x)=2\int_{\G_{j}}\frac{\pa\Phi_{1}(x,y)}{\pa\nu(y)}\psi(y)ds(y),
                                                              \qquad x\in\;\G_{i}
 \enn
for $i=0,1,2, j=1,2$.

\begin{theorem}\label{well-posedness}
%Let $T_{1},\;T_{2}\in T_{d}^{0,\al}(S_{0})$ and $T_{3}\in T_{d}^{0,\al}(S_{1})$
%be three tangential fields, then
The boundary value problem $(\ref{0EH})-(\ref{0ibc})$ has a unique
solution. The solution depends continuously on the boundary data in
the sense that the operator mapping the given boundary data onto the
solution is continuous from $T_{d}^{0,\al}(S_{0})\times
T_{d}^{0,\al}(S_{0})\times T_{d}^{0,\al}(\G_{1})\times
T^{0,\al}(\G_{2})$ into $C^{0,\al}(\ov{\Om_{0}})\times
C^{0,\al}(\ov{\Om_{0}})\times C^{0,\al}(\ov{\Om_{1}})\times
C^{0,\al}(\ov{\Om_{1}})$.
\end{theorem}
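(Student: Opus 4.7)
The plan is to establish existence and continuous dependence by the integral equation method, since Theorem \ref{uni.direct} already supplies uniqueness. I would represent the fields through vector layer potentials of a combined-field form. In $\Om_0$ take $(E,H)$ as a magnetic/electric dipole distribution on $S_0$ with wave number $k_0$ and unknown densities $a_0, a_1\in T_d^{0,\al}(S_0)$; in $\Om_1$ take $(F,G)$ as analogous dipole distributions on $S_0$ with wave number $k_1$, together with an electric dipole distribution on $\G_1$ with density $b\in T_d^{0,\al}(\G_1)$ and, on $\G_2$, a magnetic dipole augmented by the surface gradient of a scalar single-layer potential with densities $c\in T^{0,\al}(\G_2)$ and $\psi\in C^{0,\al}(\G_2)$. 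The scalar correction on $\G_2$ is the standard device (cf.\ Colton--Kress, \S 9.5) which turns the impedance condition (\ref{0ibc}) into a second-kind integral equation.

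Applying the jump relations for the vector single- and double-layer potentials on the $C^2$ surfaces $S_0$ and $S_1$, the transmission conditions (\ref{0tr}) and the boundary conditions (\ref{0pbc})--(\ref{0ibc}) translate into a coupled system of the schematic form $(I+\mathcal{A})\mathbf{u}=\mathbf{f}$ on the H\"older product space $T_d^{0,\al}(S_0)^{2}\times T_d^{0,\al}(\G_1)\times T^{0,\al}(\G_2)\times C^{0,\al}(\G_2)$, with $\mathbf{u}=(a_0,a_1,b,c,\psi)$, $\mathbf{f}$ built from $(T_1,T_2,T_3,T_4)$, and $\mathcal{A}$ a block operator assembled from $M_{i,j}, N_{i,j}, \wi{M}_{i,j}, \wi{N}_{i,j}, \wi{S}_{i,j}, \wi{K}_{i,j}$ and their restrictions to $\G_1$ and $\G_2$. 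Each entry is compact on the indicated H\"older space---immediately for $M, \wi{M}, \wi{S}, \wi{K}$, and, for the hypersingular parts of $N$ and $\wi{N}$, after subtracting the frequency-zero kernel so that the difference is weakly singular. Hence $I+\mathcal{A}$ is Fredholm of index zero.

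By the Riesz--Fredholm theory, existence and bounded invertibility reduce to injectivity. A null element $\mathbf{u}$ of $I+\mathcal{A}$ defines through the ansatz a radiating solution of the homogeneous boundary value problem, which by Theorem \ref{uni.direct} must vanish in $\Om_0\cup\Om_1$; propagating this information across the jumps and into the complementary interior problems then forces $\mathbf{u}=0$, provided the coupling constants in the combined-field ansatz are selected to avoid the classical interior electromagnetic and impedance eigenvalues. The continuous dependence in the stated norms follows at once from the bounded inverse $(I+\mathcal{A})^{-1}$ and the mapping properties of the potential representations from H\"older densities on $S_0,S_1$ into $C^{0,\al}(\ov{\Om_0})$ and $C^{0,\al}(\ov{\Om_1})$. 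The step I expect to be most delicate is precisely this last injectivity argument: one must simultaneously neutralise the interior eigenvalue obstruction for the exterior Maxwell system on $S_0$ and for the impedance problem on $\G_2$ in a manner compatible with the transmission conditions on $S_0$, and control the behaviour of the densities near the edge curve $\pa\G_1=\pa\G_2$ of the dissection $S_1=\ov{\G}_1\cup\ov{\G}_2$, which typically requires mild regularity of that curve and some matching of the boundary data.
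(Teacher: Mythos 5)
Your overall architecture---combined layer ansatz, jump relations, a second--kind system $(I+\mathcal{A})\mathbf{u}=\mathbf{f}$ on H\"older spaces, Riesz--Fredholm, and reduction of existence to injectivity via Theorem \ref{uni.direct}---is exactly the paper's route, and your five unknowns $(a_0,a_1,b,c,\psi)$ match the paper's $(a,b,c,d,\psi)$, including the scalar correction $\psi$ on $\G_2$ and the smoothing single--layer modifications that make the mixed condition tractable. The gap is in the step you yourself flag as delicate: injectivity of $I+\mathcal{A}$. You propose to ``select the coupling constants to avoid the classical interior electromagnetic and impedance eigenvalues,'' but that mechanism does not apply to the transmission coupling on $S_0$. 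After Theorem \ref{uni.direct} gives $E=H=0$ in $\Om_0$ and $F=G=0$ in $\Om_1$, the densities $a,b$ on $S_0$ are not yet seen to vanish; what obstructs them is not an interior eigenvalue of a single region but a possible nontrivial solution of a \emph{complementary transmission problem}. The paper resolves this by the H\"ahner/Colton--Kress device of re-evaluating the same densities with the fundamental solutions assigned to the ``wrong'' sides: it defines auxiliary fields $\wi{E},\wi{H}$ in $\Om$ and $\wi{F},\wi{G}$ in $\Om_0$ (equations (\ref{Enew})--(\ref{Gnew})), checks via the jump relations (\ref{11tr})--(\ref{12tr}) that they solve the homogeneous transmission problem (\ref{1FG})--(\ref{1SMrc}), and kills them by a Green's identity, Rellich's lemma and Holmgren's theorem. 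The constant $m=\sqrt{\la_H/\la_E}$ is chosen precisely so that the energy identity for this complementary problem has the right sign---not to dodge an eigenvalue. Without this construction (or an equivalent one) your injectivity argument does not close.

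A second, smaller imprecision: for the densities $c,d,\psi$ on $S_1$ the paper likewise does not ``avoid'' interior eigenvalues by tuning constants; the $i\la\widehat{S}_j^{2}$ modifications are built so that an imaginary--part (energy) identity in the \emph{interior} domain $\Om_2$ forces $\psi=0$, $\widehat{S}_1c=0$, $\widehat{S}_2d=0$ unconditionally, whence $c=d=0$. Your citation of the Colton--Kress \S 9.12--type device is the right instinct, but the proof obligation is to run that interior energy identity, not to make an eigenvalue assumption. Finally, note that the paper does not attempt any analysis of the densities near the edge curve $\pa\G_1=\pa\G_2$; it works in the global H\"older classes $T_d^{0,\al}(\G_1)$, $T^{0,\al}(\G_2)$ and invokes the compactness arguments of Theorems 6.19 and 9.12 of \cite{CK}, so the extra edge--regularity machinery you anticipate is not part of the intended argument.
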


\begin{proof}
The uniqueness of solutions follows from Theorem \ref{uni.direct}.
We now prove the existence of solutions using the integral equation method.
We seek a solution in the form
 \be
\label{E}E(x)&=&\frac{\la_{H}k_{0}}{k_{1}}\curl\int_{S_{0}}a(y)\Phi_{0}(x,y)ds(y)
        +\la_{E}\curlcurl\int_{S_{0}}b(y)\Phi_{0}(x,y)ds(y),\qquad\qquad\\
\label{H}H(x)&=&\frac{1}{ik_{0}}\curl E(x)\cr
     &=&\frac{\la_{H}}{ik_{1}}\curlcurl\int_{S_{0}}a(y)\Phi_{0}(x,y)ds(y)
       +\frac{\la_{E}k_{0}}{i}\curl\int_{S_{0}}b(y)\Phi_{0}(x,y)ds(y)\qquad\qquad
 \en
for $x\in \Om_{0}$ and
 \be
\label{F}F(x)&=&\curl\int_{S_{0}}a(y)\Phi_{1}(x,y)ds(y)
                +\curlcurl\int_{S_{0}}b(y)\Phi_{1}(x,y)ds(y)\cr
   &&+\curl\int_{\G_{1}}c(y)\Phi_{1}(x,y)ds(y)
    +\frac{i}{k_{1}^{2}}\curlcurl\int_{\G_{1}}\nu(y)
    \times(\widehat{S}_{1}^{2}c)(y)\Phi_{1}(x,y)ds(y)\cr
  &&+\int_{\G_{2}}d(y)\Phi_{1}(x,y)ds(y)+i\la\curl\int_{\G_{2}}\nu(y)
    \times(\widehat{S}_{2}^{2}d)(y)\Phi_{1}(x,y)ds(y)\cr
   &&+{\rm grad}\int_{\G_{2}}\psi(y)\Phi_{1}(x,y)ds(y)
     +i\la\int_{\G_{2}}\nu(y)\psi(y)\Phi_{1}(x,y)ds(y), \\
\label{G}G(x)&=&\frac{1}{ik_{1}}\curl F(x)\cr
   &=&\frac{1}{ik_{1}}\curlcurl\int_{S_{0}}a(y)\Phi_{1}(x,y)ds(y)
      -ik_{1}\curl\int_{S_{0}}b(y)\Phi_{1}(x,y)ds(y)\cr
   &&+\frac{1}{ik_{1}}\curlcurl\int_{\G_{1}}c(y)\Phi_{1}(x,y)ds(y)\cr
   &&+\frac{1}{k_{1}}\curl\int_{\G_{1}}\nu(y)
     \times(\widehat{S}_{1}^{2}c)(y)\Phi_{1}(x,y)ds(y)\cr
   &&+\frac{1}{ik_{1}}\curl\int_{\G_{2}}d(y)\Phi_{1}(x,y)ds(y)\cr
   &&+\frac{\la}{k_{1}}\curlcurl\int_{\G_{2}}\nu(y)
     \times(\widehat{S}_{2}^{2}d)(y)\Phi_{1}(x,y)ds(y)\cr
   &&+\frac{\la}{k_{1}}\curl\int_{\G_{2}}\nu(y)\psi(y)\Phi_{1}(x,y)ds(y)
 \en
for $x\in \Om\setminus S_{1},$ where $a, b\in T_{d}^{0,\al}(S_{0}),
c\in T_{d}^{0,\al}(\G_{1}), d\in T^{0,\al}(\G_{2})$ and $\psi\in
C^{0,\al}(\G_{2})$ are five densities to be determined and
$\wi{S}_{j}$ is the single-layer operator given by
 \ben
 (\widehat{S}_{j}c)(x):=\frac{1}{2\pi}\int_{\G_{j}}\frac{1}{|x-z|}c(z)ds(z),
     \qquad x\in \G_{j},\;j=1,2.
 \enn

If the densities $a, b\in T_d(S_0),c\in T_d^{0,\al}(\G_1),d\in T^{0,\al}(\G_{2})$
and $\psi\in C^{0,\al}(\G_{2})$ satisfy the following system of integral equations
(\ref{T1})-(\ref{divF=0}), then we can conclude from the integral equations (\ref{T1}) and
(\ref{T2}) that $a, b\in T_{d}^{0,\al}(S_{0})$. Therefore we choose
the solution space $X:=T_{d}(S_{0})\times T_{d}(S_{0})\times
T_{d}^{0,\al}(\G_{1})\times T^{0,\al}(\G_{2})\times C^{0,\al}(\G_{2})$
for the system of integral equations (\ref{T1})-(\ref{divF=0}) below.
The vector field $F$ given by (\ref{F}) clearly satisfies the vector Helmholtz
equation and its cartesian components satisfy the Sommerfeld radiation condition.
Hence, if we insist that ${\rm div} F=0$ in $\R^{3}\setminus \G_2$, then by
Theorems 6.4 and 6.7 in \cite{CK} we have that $F,G$ satisfy the
Maxwell equations (\ref{0FG}). Since ${\rm div} F$ satisfies the
scalar Helmholtz equation and the Sommerfeld radiation condition, by
the uniqueness for the exterior Dirichlet problem (see Theorem 3.7
in \cite{CK}) it suffices to impose ${\rm div} F=0$ only on the
boundary $\G_2$. Then the jump relations together with the
regularity results of surface potentials imply that $E,H,F,G$
defined in (\ref{E})-(\ref{G}) solve the direct problem provided the
densities $a,b,c,d,\psi$ solve
 \be
\label{T1}\left(\la_{E}+\frac{\la_{H}k_{0}}{k_{1}}\right)a
           +L_{1}a+M_{1}b+N_{1}c+P_{1}d+Q_{1}\psi&=&2T_{1}\quad {\rm on}\;S_{0}, \\
\label{T2}\frac{\la_{E}k_{0}+\la_{H}k_{1}}{i}b+L_{2}a+M_{2}b
           +N_{2}c+P_{2}d+Q_{2}\psi&=&2T_{2}\quad {\rm on}\;S_{0}, \\
\label{T3}c+L_{3}a+M_{3}b+N_{3}c+P_{3}d+Q_{3}\psi&=&2T_{3}\quad {\rm on}\;\;\G_{1},\\
\label{T4}d+L_{4}a+M_{4}b+N_{4}c+P_{4}d+Q_{4}\psi&=&2T_{4}\quad {\rm on}\;\;\G_{2},\\
\label{divF=0}i\la\psi+P_{5}d+Q_{5}\psi&=&0\qquad {\rm on}\;\;\G_{2}
 \en
where
 \ben
L_{1}&:=&\frac{\la_{H}k_{0}}{k_{1}}M_{0,0}-\la_{E}M_{0,1},\hspace{2.5cm}
M_{1}:=\la_{E}(N_{0,0}-N_{0,1})R,\\
N_{1}&:=&-\la_{E}(\wi{M}_{0,1}+\frac{i}{k_{1}^{2}}\wi{N}_{0,1}P\widehat{S}_{1}^{2}),\hspace{1.9cm}
P_{1}:=\la_{E}R\wi{S}_{0,2}+i\la\la_{E}\wi{M}_{0,2}R\widehat{S}_{2}^{2},\\
Q_{1}\psi&:=&-2\la_{E}\nu(x)\times{\rm grad}
             \int_{\G_{2}}\Phi_{1}(x,y)\psi(y)ds(y)+i\la\la_{E}R\wi{S}_{0,2}(\nu\psi),\\
L_{2}&:=&\frac{\la_{H}}{ik_{1}}(N_{0,0}-N_{0,1})R,\hspace{2.9cm}
M_{2}:=\frac{\la_{E}k_{0}}{i}M_{0,0}-\frac{\la_{H}k_{1}}{i}M_{0,1},\;\;\\
N_{2}&:=&-\frac{\la_{H}}{ik_{1}}\wi{N}_{0,1}R
         +\frac{\la_{H}}{k_{1}}\wi{M}_{0,1}R\widehat{S}_{1}^{2},\hspace{1.1cm}
P_{2}:=\frac{i\la_{H}}{k_{1}}\wi{M}_{0,2}
     -\frac{\la\la_{H}}{k_{1}}\wi{N}_{0,2}P\widehat{S}_{2}^{2},\\
Q_{2}\psi&:=&-2\frac{\la\la_{H}}{k_{1}}\nu(x)\times\curl\int_{\G_{2}}\nu(y)\psi(y)\Phi_{1}(x,y)ds(y),\\
L_{3}&:=&M_{1,1},\qquad\qquad M_{3}:=N_{1,1}R,\hspace{1.5cm}
N_{3}:=\wi{M}_{1,1}+\frac{i}{k_{1}^{2}} \wi{N}_{1,1}P\widehat{S}_{1}^{2},\\
P_{3}&:=&-R\wi{S}_{1,2}-i\la\wi{M}_{1,2}R\widehat{S}_{2}^{2},\\
Q_{3}\psi:&=&2\nu(x)\times{\rm
    grad}\int_{\G_{2}}\Phi_{1}(x,y)\psi(y)ds(y)-i\la R\wi{S}_{1,2}(\nu\psi),\\
L_{4}&:=&N_{2,1}-i\la RM_{2,1},\hspace{3.5cm} M_{4}:=k_{1}^{2}M_{2,1}-i\la RN_{2,1}R,\\
N_{4}&:=&\wi{N}_{2,1}R-i\wi{M}_{2,1}R\widehat{S}_{1}^{2}
     -i\la R\wi{M}_{2,1}+\frac{\la}{k_{1}^{2}} R\wi{N}_{2,1}P\widehat{S}_{1}^{2},\\
P_{4}&:=&\wi{M}_{2,2}+i\la\wi{N}_{2,2}P\widehat{S}_{2}^{2}-i\la
P\wi{S}_{2,2}
     -\la^{2}R\wi{M}_{2,2}R\widehat{S}_{2}^{2}+\la^{2}P\widehat{S}_{2}^{2},\\
Q_{4}\psi&:=&2i\la\nu(x)\times\int_{\G_{2}}{\rm
grad}_{x}\Phi_{1}(x,y)\times\{\nu(y)-\nu(x)\}\psi(y)ds(y)
     +\la^{2}P\wi{S}_{2,2}(\nu\psi),\\
P_{5}d&:=&-2\int_{\G_{2}}{\rm grad}_{x}\Phi_{1}(x,y)\cdot
d(y)ds(y),\hspace{1cm} Q_{5}=k_{1}^{2}\wi{S}_{2,2}+i\la\wi{K}_{2,2}.
 \enn
Here $R, P$ are defined by $Ra:=a\times\nu$ and $Pa:=(\nu\times a)\times\nu,$ respectively.

Writing the system of integral equations (\ref{T1})-(\ref{divF=0})
in the matrix form
 \ben
 \left(\begin{matrix}\la_{a}&0&0&0&0\\
      0&\la_{b}&0&0&0\\
      0&0&1&0&0\\0&0&0&1&0\\0&0&0&P_{5}&i\la
 \end{matrix}\right)
 \left(\begin{matrix}a\\b\\c\\d\\\psi
 \end{matrix}\right)
+\left(\begin{matrix}L_{1}&M_{1}&N_{1}&P_{1}&Q_{1}\\ L_{2}&M_{2}&N_{2}&P_{2}&Q_{2}\\
      L_{3}&M_{3}&N_{3}&P_{3}&Q_{3}\\L_{4}&M_{4}&N_{4}&P_{4}&Q_{4}\\0&0&0&0&Q_{5}
 \end{matrix}\right)
 \left(\begin{matrix}a\\b\\c\\d\\\psi
 \end{matrix}\right)
=\left(\begin{matrix}2T_{1}\\2T_{2}\\2T_{3}\\2T_{4}\\0
 \end{matrix}\right),
 \enn
where $\la_{a}=\la_{E}+(\la_{H}k_{0})/k_{1}$ and $\la_{b}=-i\la_{E}k_{0}-i\la_{H}k_{1}$,
we can see that the first matrix operator has a bounded inverse because of its
triangular form and the second one is compact by Theorems 6.15 and 6.16 in \cite{CK}
and the argument in the proof of Theorems 6.19 and 9.12 in \cite{CK}. Therefore
the Riesz-Fredholm theory can be applied to prove the existence of a unique solution to the
system (\ref{T1})-(\ref{divF=0}) in the solution space $X$.
To do this, let $a, b, c, d$ and $\psi$ be a solution to the homogeneous form
of (\ref{T1})-(\ref{divF=0}) (i.e., $T_{1}=T_{2}=0$ on $S_{0}$, $T_{3}=0$ on $\G_{1}$ and
$T_{4}=0$ on $\G_{2}$). We then need to show that $a=b=c=d=\psi=0$.
First, by Theorem \ref{uni.direct} we conclude that $E=H=0$ in $\Om_{0}$ and $F=G=0$ in $\Om_{1}$.

Now, by the jump relations we have
 \be
\label{gama1}-\nu\times F_{-}=c,\quad-\nu\times G_{-}
              =\frac{1}{k_{1}}\nu\times\widehat{S}_{1}^{2}c\qquad{\rm on}\;\;\G_{1},\\
\label{gama21}-\nu\times F_{-}=i\la\nu\times
 \widehat{S}_{2}^{2}d,\quad -\nu\times G_{-}=\frac{1}{ik_{1}}d\qquad{\rm on}\;\; \G_{2},\\
\label{gama22} -{\rm div}F_{-}=-i\la\psi,\quad -\nu\cdot F_{-}=-\psi\qquad {\rm on}\;\; \G_{2}\;
 \en
where $F_{-},\; G_{-}$ denote the limit of $F,\;G$ on the surface
$S_{1}$ from the interior of $S_{1}$, respectively. Hence, using Green's
vector theorem, we derive from (\ref{gama1})-(\ref{gama22}) that
 \ben
 &&i\int_{\G_{1}}|\widehat{S}_{1}c|^{2}ds+i\la\int_{\G_{2}}|\widehat{S}_{2}d|^{2}ds
   +i\la\int_{\G_{2}}|\psi|^{2}ds\cr
 &=&\int_{S_{1}}\{\nu\times \ov{F_{-}}\cdot \curl F+\nu\cdot \ov{F_{-}}{\rm div}F\}ds\cr
 &=&\int_{\Om_{2}}\{|\curl F|^{2}+|{\rm div}F|^{2}-[(\Re k_{1})^{2}
       -(\Im k_{1})^{2}]|F|^{2}-2i\Re k_{1}\Im k_{1}|F|^{2}\}dx.
 \enn
Taking the imaginary part of the last equation gives that $\psi=0$ on $\G_{2}$,
$\widehat{S}_{1}c=0$ on $\G_{1}$ and $\widehat{S}_{2}d=0$ on $\G_{2}$.
This implies that $c=0$ on $\G_{1}$, $d=0$ on $\G_{2}$ (see the proof
of Theorem 3.10 in \cite{CK}).

On the other hand, define
 \be
\label{Enew}\wi{E}(x)&=&-m\curl\int_{S_{0}}a(y)\Phi_{0}(x,y)ds(y)
        -\frac{m\la_{E}k_{1}}{\la_{H}k_{0}}\curlcurl\int_{S_{0}}b(y)\Phi_{0}(x,y)ds(y),\;\;\qquad\\
\label{Hnew}\wi{H}(x)&=&\frac{1}{ik_{0}}\curl \wi{E}(x)\cr
     &=&\frac{i m}{k_{0}}\curlcurl\int_{S_{0}}a(y)\Phi_{0}(x,y)ds(y)
       -\frac{m\la_{E}k_{1}}{i\la_{H}}\curl\int_{S_{0}}b(y)\Phi_{0}(x,y)ds(y)\qquad
 \en
for $x\in\Om$ and
 \be
\label{Fnew}\wi{F}(x)&=&\curl\int_{S_{0}}a(y)\Phi_{0}(x,y)ds(y)
            +\curlcurl\int_{S_{0}}b(y)\Phi_{0}(x,y)ds(y),\;\;\qquad\\
\label{Gnew}\wi{G}(x)&=&\frac{1}{ik_{1}}\curl \wi{F}(x)\cr
   &=&\frac{1}{ik_{1}}\curlcurl\int_{S_{0}}a(y)\Phi_{0}(x,y)ds(y)
            -ik_{1}\curl\int_{S_{0}}b(y)\Phi_{0}(x,y)ds(y)\qquad
 \en
for $x\in \Om_{0}$, where $m$ is a constant given by
$m:=\sqrt{\la_{H}/\la_{E}}$. Then by the jump relations we have
 \be
\label{11tr}\nu\times\wi{F}-\nu\times F=a,\qquad
  \frac{k_{1}}{\la_{H}k_{0}}\nu\times E+\frac{1}{m}\nu\times\wi{E}
     =a\qquad&& {\rm on}\;S_{0},\qquad \\
\label{12tr}\nu\times\wi{G}-\nu\times G=-ik_{1}b,\qquad
  \frac{k_{1}}{\la_{E}k_{0}}\nu\times H
     +\frac{\la_{H}}{m\la_{E}}\nu\times\wi{H}=-ik_{1}b\qquad&& {\rm on}\;S_{0}.
 \en
Thus, $\wi{E},\wi{H},\wi{F},\wi{G}$ given by (\ref{Enew})-(\ref{Gnew}) solves
the homogeneous transmission problem
 \be
 \label{1FG}\curl\wi{F}-ik_1\wi{G}=0,\qquad\curl\wi{G}+ik_1\wi{F}=0\quad&&\mbox{in}\;\;\Om_0,\\
 \label{1EH}\curl\wi{E}-ik_0\wi{H}=0,\qquad\curl\wi{H}+ik_0\wi{E}=0\quad&&\mbox{in}\;\;\Om,\\
 \label{1tr}\nu\times \wi{F}-\frac{1}{m}\nu\times \wi{E}=0,\qquad
   \nu\times \wi{G}-\frac{\la_{H}}{m\la_{E}}\nu\times \wi{H}=0 \quad&&\mbox{on}\;\;S_0,\\
 \label{1SMrc} \lim_{|x|\rightarrow\infty}(\wi{H}\times x-|x|\wi{E})=0\quad&&
 \en
where the limit holds uniformly in all directions $x/|x|$.
Similar to the argument as in the proof of Theorem \ref{uni.direct},
we find on using Green's vector theorem and by the definition of $m$ that
 \ben\label{uni2}
\int_{S_{0}}\nu\times \wi{F}\cdot \ov{\wi{G}}ds&=&\int_{S_{0}}\nu\times \wi{F}\cdot
  [(\nu\times\ov{\wi{G}})\times\nu]ds=\int_{S_{0}}\nu\times \wi{E}
  \cdot [(\nu\times\ov{\wi{H}})\times\nu]ds\cr
&=&\int_{\Om}(\curl\wi{E}\cdot\ov{\wi{H}}-\wi{E}\cdot\curl\ov{\wi{H}})dx
  =ik_{0}\int_{\Om_{1}}(|\wi{H}|^{2}-|\wi{E}|^{2})dx
 \enn
Taking the real part of the above equation, we have
 \ben
\Re\int_{S_{0}}\nu\times \wi{F}\cdot \ov{\wi{G}}ds\leq0.
 \enn
Therefore, by Rellich's lemma (see Theorem 4.17 in \cite{CK83}), it
follows that $\wi{F}=\wi{G}=0$ in $\Om_{0}$. The transmission
boundary conditions (\ref{1tr}) and Holmgren's uniqueness theorem \cite{Kress001}
imply that $\wi{E}=\wi{H}=0$ in $\Om$.
Hence, by the relations (\ref{11tr}) and (\ref{12tr}), we conclude that $a=b=0$ on
$S_{0}$, which completes the proof.
\end{proof}

As incident fields $E^{i}$, besides the electromagnetic plane waves
we are also interested in the electromagnetic field of an electric
dipole with polarization $p$ which is given by
 \be\label{Eied}
E^{i}(x;z_{j},p)=\frac{i}{k_{j}}\curlcurl(p\Phi(x,z_{j})),\;\;\;
H^{i}(x;z_{j},p)=\curl(p\Phi_{j}(x,z_{j}))
 \en
located at the point $z_{j}\in\Om_{j}$ for $x\neq z_{j}$ $(j=0,1)$.
For $j=0,1$, denote by $E(\cdot;z_{j},p)$ and $H(\cdot;z_{j},p)$ the
corresponding total waves, by $E^{s}(\cdot;z_{j},p)$ and
$H^{s}(\cdot;z_{j},p)$ the scattered waves and by
$E^{\infty}(\cdot;z_{j},p)$ and $H^{\infty}(\cdot;z_{j},p)$ the
corresponding far field patterns, indicating the dependence on the
location $z_{j}$ and polarization $p$ of the electric dipole.

\begin{remark}\label{ele} {\rm
In the case when the incident field is given by the electric dipole
$E^{i}(x;z_{0},p), H^{i}(x;z_{0},p)$ located at $z_{0}\in\Om_{0}$,
we have $E=E^{s}(x;z_{0},p),\; T_{1}=-\nu\times E^{i}(x;z_{0},p),\;
T_{2}=-\nu\times H^{i}(x;z_{0},p),\;T_{3}=0$ and $T_{4}=0$ in the
boundary value problem $(\ref{0EH})-(\ref{0ibc})$. In the case when the incident
field is given by the electric dipole $E^{i}(x;z_{1},p), H^{i}(x;z_{1},p)$
located at $z_{1}\in\Om_{1}$, we have
$E=E(x;z_{1},p),\; T_{1}=\la_{E}\nu\times E^{i}(x;z_{1},p),\;
T_{2}=\la_{H}\nu\times H^{i}(x;z_{1},p),\;T_{3}=-\nu\times E^{i}(x;z_{1},p)$ and
$T_{4}=-\nu\times H^{i}(x;z_{1},p)+\frac{\la}{k_{1}}(\nu\times
E^{i}(x;z_{1},p))\times\nu$ in the boundary value problem
$(\ref{0EH})-(\ref{0ibc})$. Note that in the last case
$E^{\infty}(\cdot;z_{1},p)$ is the electric far field pattern of the
field $E(x;z_{1},p)=E^{s}(x;z_{1},p)$ for $x\in \Om_{0}$. This is
different from our early work \cite{LZaa}, where the total field
$E(x;z_{1},p)$ in $\Om_{0}$ is decomposed into the scattered field
$E^{s}(x;z_{1},p)$ and the incident field $E^{i}(x;z_{1},p)$, so
$E^{\infty}(\cdot;z_{1},p)$ is the electric far field pattern of the
scattered field $E^{s}(x;z_{1},p)$ rather than the total field $E(x;z_{1},p)$.
}
\end{remark}

We now establish the following mixed reciprocity relation which is
needed for the inverse problem.

\begin{lemma}\label{mix}
{\rm(Mixed reciprocity relation)} For scattering of plane waves
$E^{i}(x,d,q)$ and electric dipole $E^{i}(x;z,p)$ from the obstacle
$\Om_{2}$ we have
 \be
\label{mixz0}4\pi q\cdot E^{\infty}(-d;z_0,p)=p\cdot E^s(z_0,d,q),\qquad z_{0}\in\Om_0,\\
\label{mixz1}4\pi q\cdot E^{\infty}(-d;z_1,p)
      =\la_{E}\la_{H}p\cdot F(z_{1},d,q),\qquad z_{1}\in\Om_{1}
 \en
for all incident directions $d\in S^{2}$ and all polarizations $p,q\in\R^{3}$.
\end{lemma}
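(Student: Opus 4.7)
The plan is to extend the acoustic mixed reciprocity of \cite{LZip} to the vectorial setting; the main ingredients are (a) Green's vector theorem applied on $\Om_0\cap B_R$ and on $\Om_1$, (b) singularity extraction for the electric dipole at $z_j$, (c) the Silver--M\"uller radiation condition together with the standard far-field representation of a radiating electric field, and (d) the transmission conditions \eqref{0tr} on $S_0$ and the homogeneous boundary conditions \eqref{0pbc}--\eqref{0ibc} on $S_1$.

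For \eqref{mixz0} with $z_0\in\Om_0$, I apply Green's vector theorem in $(\Om_0\cap B_R)\setminus\ov{B(z_0,\vep)}$ to the pair $(E^s(\cdot,d,q),H^s(\cdot,d,q))$ and the dipole-excited total field $(E(\cdot;z_0,p),H(\cdot;z_0,p))=(E^i(\cdot;z_0,p)+E^s(\cdot;z_0,p),\,H^i(\cdot;z_0,p)+H^s(\cdot;z_0,p))$. As $\vep\to 0$, the integral over $\partial B(z_0,\vep)$ isolates the dipole singularity and yields $p\cdot E^s(z_0,d,q)$. As $R\to\infty$, the integral over $\partial B_R$ is evaluated via Silver--M\"uller together with the asymptotic expansion of the dipole field and produces $4\pi q\cdot E^\infty(-d;z_0,p)$. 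The remaining integral on $S_0$ is rewritten, by the homogeneous transmission conditions for both incident problems, as the analogous integral of the fields $(F(\cdot,d,q),G(\cdot,d,q))$ and $(F(\cdot;z_0,p),G(\cdot;z_0,p))$ in $\Om_1$. Applying Green's vector theorem in $\Om_1$ transfers this integral onto $S_1$, where the homogeneous boundary conditions annihilate it exactly as in the proof of Theorem \ref{uni.direct}.

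For \eqref{mixz1} with $z_1\in\Om_1$, the dipole singularity now lies in $\Om_1$, so Green's theorem in $\Om_1\setminus\ov{B(z_1,\vep)}$ extracts the value $p\cdot F(z_1,d,q)$. When the corresponding $S_0$ integral is transferred to the $\Om_0$ side via the transmission conditions, the product $\la_E\la_H=k_0/k_1$ appears and accounts for the extra factor on the right-hand side. In $\Om_0$ the relevant field is $E(\cdot;z_1,p)=E^s(\cdot;z_1,p)$ (cf.\ Remark \ref{ele}), which is radiating; pairing it with $(E^s(\cdot,d,q),H^s(\cdot,d,q))$ and applying Green's theorem in $\Om_0\cap B_R$ yields, as before, $4\pi q\cdot E^\infty(-d;z_1,p)$ from $\partial B_R$ and an $S_0$ integral that matches the one coming from the $\Om_1$ side. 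The $S_1$ contributions cancel by the homogeneous boundary conditions.

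The principal difficulty is the identification of the $\partial B_R$ limit with $4\pi q\cdot E^\infty(-d;z_j,p)$: this amounts to matching the leading outgoing-wave term of the dipole, namely $\tfrac{ik_0}{4\pi}(\widehat x\times p)\times\widehat x\,e^{-ik_0\widehat x\cdot z_j}\,e^{ik_0|x|}/|x|$, with the far-field representation of $E^s(\cdot,d,q)$ tested against $q$ in the direction $\widehat x=-d$. Once this calibration is in place, the remaining transmission and boundary bookkeeping goes through exactly as in the proof of Theorem \ref{uni.direct} and the acoustic analogue in \cite{LZip}.
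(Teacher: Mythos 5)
There is a genuine gap in how you produce the far-field term. In the Green's identity you set up on $(\Om_0\cap B_R)\setminus\ov{B(z_0,\vep)}$, \emph{both} fields in the pairing are radiating: $E^s(\cdot,d,q)$ by definition, and the dipole-excited total field $E(\cdot;z_0,p)=E^i(\cdot;z_0,p)+E^s(\cdot;z_0,p)$ because the dipole itself is built from the outgoing kernel $\Phi_0(\cdot,z_0)$. For two radiating solutions the reciprocity integral $\int_{\pa B_R}\{\nu\times E_1\cdot H_2+\nu\times H_1\cdot E_2\}\,ds$ tends to zero as $R\to\infty$ (with $H_j^\infty=\hat{x}\times E_j^\infty$ the leading $O(R^{-2})$ contributions cancel identically), so your $\pa B_R$ boundary term vanishes and cannot yield $4\pi q\cdot E^{\infty}(-d;z_0,p)$; your identity delivers only the Stratton--Chu representation of $p\cdot E^s(z_0,d,q)$ as an $S_0$ integral. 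The quantity $4\pi q\cdot E^{\infty}(-d;z_0,p)$ is the far field of the dipole-\emph{scattered} field tested against the \emph{incident plane wave}; it requires the separate identity $4\pi q\cdot E^{\infty}(-d;z_0,p)=\int_{S_0}\{\nu\times E^{s}(\cdot;z_0,p)\cdot H^{i}(\cdot,d,q)+\nu\times H^{s}(\cdot;z_0,p)\cdot E^{i}(\cdot,d,q)\}\,ds$, i.e.\ the far-field representation theorem (Theorem 6.9 in \cite{CK}), which is exactly what the paper imports as (\ref{qEinf2}) from Lemma 3.1 of \cite{LZaa}. Your closing ``calibration'' remark compounds the confusion: you propose to match the outgoing leading term of the dipole \emph{incident} field against the far field of $E^s(\cdot,d,q)$, but neither of these is the object $E^{\infty}(\cdot;z_0,p)$ that appears in (\ref{mixz0}). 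The same objection applies verbatim to your treatment of (\ref{mixz1}), where you again claim the $\pa B_R$ term of a pairing of two radiating fields equals $4\pi q\cdot E^{\infty}(-d;z_1,p)$.

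A second, related problem is the passage through $S_0$. The $S_0$ integral your identity produces pairs the \emph{scattered} traces $\nu\times E^{s}(\cdot,d,q)$, $\nu\times H^{s}(\cdot,d,q)$ with the dipole total field, whereas the transmission conditions (\ref{0tr}) relate the \emph{total} plane-wave field to $F,G$; the term $\int_{S_0}\{\nu\times E^{i}(\cdot,d,q)\cdot H(\cdot;z_0,p)+\nu\times H^{i}(\cdot,d,q)\cdot E(\cdot;z_0,p)\}\,ds$ that you would need to add is not zero (the dipole total field does not solve a Maxwell system inside $S_0$). The correct bookkeeping, as in the paper, is to subtract the two representation formulas and insert the two genuinely vanishing terms $\int_{S_0}\{\nu\times E^{s}(\cdot;z_0,p)\cdot H^{s}(\cdot,d,q)+\nu\times H^{s}(\cdot;z_0,p)\cdot E^{s}(\cdot,d,q)\}\,ds=0$ (two radiating fields) and the analogous integral for the two incident fields (Green's theorem in $\Om$, using $z_0\notin\ov{\Om}$), so that the difference becomes $\int_{S_0}$ of the reciprocity form of the two \emph{total} fields; only then do the transmission conditions, Green's theorem in $\Om_1$, and the boundary conditions on $S_1$ finish the argument as you describe. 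For (\ref{mixz1}) the paper additionally obtains $\la_E\la_H\,p\cdot F(z_1,d,q)$ from the Stratton--Chu formula in $\Om_1$ over \emph{both} boundary components $S_0$ and $S_1$, not merely from the constant $\la_E\la_H$ appearing in a transferred $S_0$ integral.
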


\begin{proof}
Arguing similarly as in the proof of Lemma 3.1 in \cite{LZaa}, we
have
 \be\label{qEinf2}
&&4\pi q\cdot E^{\infty}(-d;z,p)\cr
&&=\int_{S_{0}}\{\nu(y)\times E^{s}(y;z,p)\cdot H(y,d,q)
   +\nu(y)\times H^{s}(y;z,p)\cdot E(y,d,q)\}ds(y)
 \en
for all $z\in\Om_{0}\cup\Om_{1}$ and all $d\in S^{2},\;p,q\in\R^{3}$.

We first consider the case $z:=z_{0}\in\Om_{0}$. Using a similar argument
as in the proof of Lemma 3.1 in \cite{LZaa}, we have
 \be\label{ps}
&&p\cdot E^{s}(z,d,q)\cr
&&=\int_{S_{0}}\{\nu(y)\times H(y,d,q)\cdot E^{i}(y;z,p)
  -\nu(y)\times H^{i}(y;z,p)\cdot E(y,d,q)\}ds(y)
 \en
for all $z\in\Om_{0}$ and all $d\in S^{2},\;p,q\in \R^{3}$.
Subtract the equation (\ref{ps}) from the equation (\ref{qEinf2}) to obtain that
 \ben
 &&4\pi q\cdot E^{\infty}(-d;z,p)-p\cdot E^{s}(z,d,q)\cr
&&=\int_{S_{0}}\{\nu(y)\times E(y;z,p)\cdot H(y,d,q)
   +\nu(y)\times H(y;z,p)\cdot E(y,d,q)\}ds(y)\cr
&&=\la_{E}\la_{H}\int_{S_{0}}\{\nu(y)\times F(y;z,p)\cdot G(y,d,q)
   +\nu(y)\times G(y;z,p)\cdot F(y,d,q)\}ds(y)\cr
&&=\la_{E}\la_{H}\int_{S_{1}}\{\nu(y)\times F(y;z,p)\cdot G(y,d,q)
   +\nu(y)\times G(y;z,p)\cdot F(y,d,q)\}ds(y)\cr
&&=0
 \enn
for all $z\in\Om_{0}$ and all $d\in S^{2},\;p,q\in\R^{3}$,
where we have used the transmission boundary condition (\ref{0tr})
in the second equality, Green's vector theorem and Maxwell's equations
(\ref{0FG}) in the third equality and the boundary conditions
(\ref{0pbc}) and (\ref{0ibc}) in the fourth equality.

We now consider the case $z:=z_{1}\in\Om_1$. Using the transmission
boundary condition (\ref{0tr}) and  Green's vector theorem, we deduce that
 \be\label{qEinf3}
&&4\pi q\cdot E^{\infty}(-d;z,p)\cr
&=&\la_{E}\la_{H}\int_{S_{0}}\{\nu(y)\times E^{i}(y;z,p)\cdot
G(y,d,q)+\nu(y)\times H^{i}(y;z,p)\cdot F(y,d,q)\}ds(y)\cr &&
+\la_{E}\la_{H}\int_{S_{0}}\{\nu(y)\times F(y;z,p)\cdot
G(y,d,q)+\nu(y)\times G(y;z,p)\cdot F(y,d,q)\}ds(y)\cr
&=&\la_{E}\la_{H}\int_{S_{0}}\{\nu(y)\times E^{i}(y;z,p)\cdot
G(y,d,q)+\nu(y)\times H^{i}(y;z,p)\cdot F(y,d,q)\}ds(y)\cr &&
+\la_{E}\la_{H}\int_{S_{1}}\{\nu(y)\times F(y;z,p)\cdot
G(y,d,q)+\nu(y)\times G(y;z,p)\cdot F(y,d,q)\}ds(y)\qquad\quad
 \en
for all $z\in\Om_{1}$ and all $d\in S^{2},\;p,q\in \R^{3}$.
From the Stratton-Chu formula (see Theorem 6.2 in \cite{CK}):
 \ben\label{es2}
F(z,d,q)&=&-\curl\int_{S_{0}}\nu(y)\times F(y,d,q)\Phi_{1}(z,y)\cr
  &&+\frac{1}{ik_{1}}\curlcurl\int_{S_{0}}\nu(y)\times G(y,d,q)\Phi_{1}(z,y)ds(y)\cr
&&+\curl\int_{S_{1}}\nu(y)\times F(y,d,q)\Phi_{1}(z,y)\cr
 &&-\frac{1}{ik_{1}}\curlcurl\int_{S_{1}}\nu(y)\times G(y,d,q)\Phi_{1}(z,y)ds(y),
 \enn
and with the help of the vector identities
 \be
\label{vi}p\cdot\curl_{z}\curl_{z}(a(y)\Phi(z,y))&=&a(y)\cdot\curl_{y}\curl_{y}(p\Phi(z,y)),\\
\label{vi2}p\cdot\curl_{z}(a(y)\Phi(z,y))&=&a(y)\cdot\curl_{y}(p\Phi(z,y)),
 \en
it follows that
 \ben\label{ps2}
&&\la_{E}\la_{H}p\cdot F(z,d,q)\cr
&=&-\la_{E}\la_{H}\int_{S_{0}}\{\nu(y)\times F(y,d,q)\cdot H^{i}(y;z,p)
   +\nu(y)\times G(y,d,q)\cdot E^{i}(y;z,p)\}ds(y)\cr
&&+\la_{E}\la_{H}\int_{S_{1}}\{\nu(y)\times F(y,d,q)\cdot H^{i}(y;z,p)
 +\nu(y)\times G(y,d,q)\cdot E^{i}(y;z,p)\}ds(y)
 \enn
for all $z\in\Om_{1}$ and all $d\in S^{2},\;p,q\in \R^{3}$. We now
subtract the last equation from the equation (\ref{qEinf3}) to obtain
 \ben
 &&4\pi q\cdot E^{\infty}(-d;z_{1},p)-\la_{E}\la_{H}p\cdot F(z_{1},d,q)\cr
 &=&\la_{E}\la_{H}\int_{S_{1}}\{\nu(y)\times [F(y;z,p)+E^{i}(y;z,p)]\cdot G(y,d,q)\cr
  && +\nu(y)\times [G(y;z,p)+H^{i}(y;z,p)]\cdot F(y,d,q)\}ds(y).
 \enn
This, together with the boundary conditions
 \ben
 \nu(y)\times [F(y;z,p)+E^{i}(y;z,p)]=\nu(y)\times F(y,d,q)=0\qquad {\rm on}\;\; \G_{1}
 \enn
and
 \ben
 &&\nu(y)\times
 [G(y;z,p)+H^{i}(y;z,p)]-\frac{\la}{k_{1}}
    [\nu(y)\times(F(y;z,p)+E^{i}(y;z,p))]\times\nu(y)\cr
 &=&\nu(y)\times G(y,d,q)-\frac{\la}{k_{1}}
    [\nu(y)\times F(y,d,q)]\times\nu(y)=0 \qquad {\rm on}\;\; \G_{2},
 \enn
gives the desired result (\ref{mixz1}).
\end{proof}

%%%%%%%%%%%%%%%%%%%%%%%%%%%%%%%%%%%%%%%%%%%%%%%%%%%%%%%%%%%%%%%%%%%%%%%%%%%%%%%%%%%%%%%
%%%%%%%%%%%%%%%%%%%%%%%%%%%%%%%%%%%%%%%%%%%%%%%%%%%%%%%%%%%%%%%%%%%%%%%%%%%%%%%%%%%%%%%

\section{Unique determination of the interface $S_{0}$}\label{sec3}
\setcounter{equation}{0}

In this section we will assume that $\Im k_{1}>0$. Denote by $S_{0}$
and $\wi{S}_{0}$ two different interfaces which yield the same electric
far field patterns for all plane incoming waves, that is,
$E^{\infty}(\widehat{x},d,q)=\wi{E}^{\infty}(\widehat{x},d,q)$ for
all $\widehat{x},d\in S^{2},\;q\in \R^{3}$. Let $\wi{\Om}$ and
$\wi{\Om_{0}}$ be the bounded and unbounded domains with interface
$\wi{S}_{0}$, respectively. We denote by $B$ a large ball containing
$\ov{\Om}\cup\ov{\wi{\Om}}$ and by $\Om_{e}$ the connected component
of $\Om_{0}\cap\wi{\Om_{0}}$ that contains the exterior of $B$. If
$S_{0}\neq\wi{S}_{0}$, we may assume without loss of generality that there is a
point $x^{\ast}\in S_{0}$ such that $x^{\ast}\in \wi{\Om_{0}}$ and
$x^{\ast}\in \pa\Om_{e}$ (the case with $x^{\ast}\in \wi{S}_{0}$ and
$x^{\ast}\in \Om_{0}$ can be treated similarly). Assume that
$B_{\delta_{1}}\subset B$ is a ball centered at $x^{\ast}$ with
sufficiently small radius $\delta_{1}>0$ such that
$B_{\delta_{1}}\cap \ov{\wi{\Om}}=\emptyset$, as shown in Figure \ref{fig2}.
For fixed $\delta_{1}$ we have $B_{\delta_{1}}\cap\Om_{0}
+\delta_{2}\nu(x^{\ast})\subset \Om_{e}$ and
$B_{\delta_{1}}\cap S_{0}-\delta_{2}\nu(x^{\ast})\subset \Om$ for
all sufficiently small $\delta_{2}>0$.

\begin{figure}[htbp]
\centering \scalebox{0.38}{\includegraphics{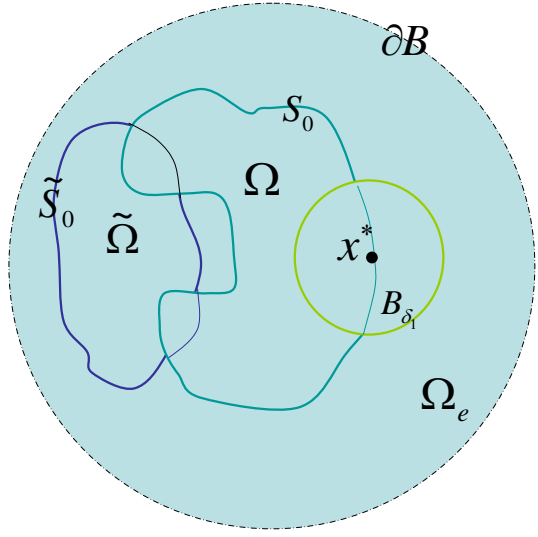}}
\caption{Two different interfaces}\label{fig2}
\end{figure}

\begin{lemma}\label{newEi}
If the electric far field patterns $E^{\infty}(\widehat{x},d,p)$ for
$S_{0}$ and $\wi{E}^{\infty}(\widehat{x},d,q)$ for $\wi{S}_{0}$
coincide for all $\widehat{x},d,q\in S^{2}$, then the electric far
field patterns also coincide for the incoming wave
 \be\label{Ei1}
 E^{i}(x)&=&\frac{i}{k_{0}}\curlcurl\int_{S_{0}}s(y)\Phi_{0}(x,y)ds(y),\cr
 H^{i}(x)&=&\curl\int_{S_{0}}s(y)\Phi_{0}(x,y)ds(y),\qquad x\in B\cap\Om_{e},
 \en
where $s\in T_{d}^{0,\al}(S_{0})$ has a compact support in
$S_{0}\cap B_{\delta_{1}}$.
\end{lemma}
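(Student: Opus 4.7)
The plan is to combine the linearity of the direct scattering problem with Rellich's lemma and the mixed reciprocity relation of Lemma \ref{mix}, followed by an approximation argument. First, the hypothesis $E^{\infty}(\widehat{x},d,q)=\wi{E}^{\infty}(\widehat{x},d,q)$ for all $\widehat{x},d\in S^{2}$ and $q\in\R^{3}$, combined with Rellich's lemma applied to the difference $E^{s}(\cdot,d,q)-\wi{E}^{s}(\cdot,d,q)$ of radiating Maxwell solutions of wave number $k_{0}$ in $\Om_e$, gives $E^{s}(x,d,q)=\wi{E}^{s}(x,d,q)$ for all $x\in\Om_e,\ d\in S^{2},\ q\in\R^{3}$. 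Then, for any $z\in\Om_e\subset\Om_{0}\cap\wi{\Om_{0}}$, applying the mixed reciprocity relation (\ref{mixz0}) to both interfaces yields
\ben
4\pi q\cdot E^{\infty}(-d;z,p)=p\cdot E^{s}(z,d,q)=p\cdot\wi{E}^{s}(z,d,q)=4\pi q\cdot\wi{E}^{\infty}(-d;z,p),
\enn
so that letting $d\in S^{2}$ and $p,q\in\R^{3}$ vary produces the equality of electric-dipole far field patterns
\ben
E^{\infty}(\widehat{x};z,p)=\wi{E}^{\infty}(\widehat{x};z,p),\qquad \widehat{x}\in S^{2},\ p\in\R^{3},\ z\in\Om_e.
\enn

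Next, I would write the incoming wave in (\ref{Ei1}) as the superposition of electric dipoles
\ben
E^{i}(x)=\int_{S_{0}\cap B_{\delta_{1}}}E^{i}(x;y,s(y))\,ds(y),
\enn
and approximate $S_{0}\cap B_{\delta_{1}}$ by a family of surfaces $S^{(n)}\subset\Om_e$ converging to $S_{0}\cap B_{\delta_{1}}$, carrying pushed-forward H\"older-continuous densities $s_{n}\to s$. Each approximating incoming wave $E^{i}_{n}(x):=\int_{S^{(n)}}E^{i}(x;y,s_{n}(y))\,ds(y)$ is a superposition of electric dipoles with source points in $\Om_e$, so the linearity of the scattering problem together with the dipole-source equality above forces the corresponding far field patterns for the two interfaces $S_{0}$ and $\wi{S}_{0}$ to coincide for every $n$. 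Passing to the limit $n\to\infty$, and invoking the continuous dependence on boundary data from Theorem \ref{well-posedness} for both scattering problems, the equality of the far field patterns persists in the limit for the incoming wave $E^{i}$ itself.

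The main obstacle is this last limit step: since $E^{i}$ is a Maxwell double-layer-type potential whose trace on $S_{0}\cap B_{\delta_{1}}$ is genuinely singular, one must choose the approximations so that the resulting boundary data on both $S_{0}$ and $\wi{S}_{0}$ converge in the H\"older norms required by Theorem \ref{well-posedness}. This is achieved by exploiting the explicit layer-potential structure of $E^{i}$ and the classical jump relations on which the mapping property in Theorem \ref{well-posedness} rests.
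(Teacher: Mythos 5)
Your first two steps reproduce the paper's argument: Rellich's lemma gives $E^{s}(\cdot,d,q)=\wi{E}^{s}(\cdot,d,q)$ in $\Om_{e}$, and the mixed reciprocity relation (\ref{mixz0}) (applicable to both structures because $\Om_{e}\subset\Om_{0}\cap\wi{\Om_{0}}$) converts this into equality of the far field patterns for electric dipoles located at points of $\Om_{e}$. The substance of the lemma, however, lies in the approximation step, and there your construction has a genuine gap. You push the source surface to the $\Om_{e}$-side of $S_{0}$, approximating by potentials over $S^{(n)}\subset\Om_{e}$. A point $x\in S_{0}\cap B_{\delta_{1}}$ then lies on the side of $S^{(n)}$ facing $\Om$, so your approximating fields converge at $x$ to the one-sided limit of the layer potential from $\Om$, not from $\Om_{e}$. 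For $E^{i}=\frac{i}{k_{0}}\curlcurl\int_{S_{0}}s\Phi_{0}\,ds$ this is harmless, since the tangential trace of $\curlcurl$ of a vector potential with density in $T_{d}^{0,\al}$ is continuous across $S_{0}$; but $\nu\times H^{i}=\nu\times\curl\int_{S_{0}}s\Phi_{0}\,ds$ has a jump across $S_{0}$ equal (up to sign) to the density $s\neq0$. Hence the magnetic boundary data induced on $S_{0}$ by your approximating incident fields converges to the trace from the wrong side, and your limit argument establishes the far-field identity for an incoming wave whose data on $S_{0}$ differs from that of $(E^{i},H^{i})$ in (\ref{Ei1}) by $s$ --- not for the incoming wave of the lemma, which is by definition the continuous extension from $\ov{B\cap\Om_{e}}$ and is the trace actually needed when the lemma is applied in Lemma \ref{uniS0}. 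Your closing paragraph correctly identifies the limit step as the main obstacle and appeals to the jump relations to resolve it, but the jump relations are precisely what show that approximation from the $\Om_{e}$-side of the source surface delivers the wrong trace.

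The paper's construction is arranged around exactly this point: it keeps the source surface fixed and shifts the \emph{evaluation point}, setting $E^{i}_{\delta_{2}}(x):=E^{i}(x+\delta_{2}\nu(x^{\ast}))$ and $H^{i}_{\delta_{2}}(x):=H^{i}(x+\delta_{2}\nu(x^{\ast}))$ for $x\in\ov{B\cap\Om_{e}}$, using $B_{\delta_{1}}\cap\Om_{0}+\delta_{2}\nu(x^{\ast})\subset\Om_{e}$ so that these fields converge uniformly on $\ov{B\cap\Om_{e}}$ to the continuous extensions of $E^{i},H^{i}$ from $\Om_{e}$; only then is a quadrature formula applied to reduce to finite sums of electric dipoles, combined with the continuous dependence of the far field on the incoming data. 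A secondary point: the dipole far-field identity of your step two is known only for single point sources, so passing to a continuous superposition $\int_{S^{(n)}}E^{i}(\cdot;y,s_{n}(y))\,ds(y)$ already requires a discretization-plus-continuity argument of its own; ``linearity'' does not by itself cover an integral over uncountably many dipoles.
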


\begin{proof}
By Rellich's Lemma \cite{CK}, the assumption
$E^{\infty}(\hat{x};d,q)=\wi{E}^{\infty}(\hat{x};d,q)$ for all
$\hat{x},d\in S^2,\;q\in \R^{3}$ implies that
$E^{s}(z,d,q)=\wi{E^{s}}(z,d,q)$ for all $z\in \Om_{e}$ and all
$d\in S^2,\;q\in \R^{3}$. Using the mixed reciprocity relation
(\ref{mixz0}), we obtain that the electric far field patterns
corresponding to both interfaces coincide for incoming waves of the form
 \be\label{Eied0}
 E^{i}(x;z,p)&=&\frac{i}{k_{0}}\curlcurl(p\Phi(x,z)),\cr
 H^{i}(x;z,p)&=&\curl(p\Phi_{0}(x,z)),\quad x\in \Om_{e},\;x\neq z.
 \en
Furthermore, $E^{i}$ and $H^{i}$ defined as in (\ref{Ei1}) are
continuous in $\ov{B\cap\Om_{e}}$ and can be uniformly approximated, respectively,
by fields of the form
 \be\label{Ei2}
 E^{i}_{\delta_{2}}(x)&=&\frac{i}{k_{0}}\curlcurl
    \int_{S_{0}}s(y)\Phi_{0}(x+\delta_{2}\nu(x^{\ast}),y)ds(y),\cr
 H^{i}_{\delta_{2}}(x)&=&\curl\int_{S_{0}}s(y)
    \Phi_{0}(x+\delta_{2}\nu(x^{\ast}),y)ds(y),\qquad x\in\ov{B\cap\Om_{e}}
 \en
by making $\delta_{2}$ sufficiently small. In fact, for $x\in
(\ov{B\cap\Om_{e}})\setminus B_{\delta_{1}}$ the differences between
$E^{i}_{\delta_{2}}(x)$ and $E^{i}(x)$ and between
$H^{i}_{\delta_{2}}(x)$ and $H^{i}(x)$ converge to zero as
$\delta_{2}\rightarrow 0$ since $B_{\delta_{1}}\cap
S_{0}-\delta_{2}\nu(x^{\ast})\subset \Om$ for all sufficiently small
$\delta_{2}>0$, so the kernels are smooth; for $x\in B_{\delta_{1}}$
the differences converge to zero since $E^{i}$ and
$H^{i}$ are uniformly continuous in $\ov{B\cap\Om_{e}}$ and
$B_{\delta_{1}}\cap \Om_{0}+\delta_{2}\nu(x^{\ast})\subset\Om_{e}$
for all sufficiently small $\delta_{2}>0$.

Finally, with the help of a quadrature formula, the integrals in the
definition of $E^{i}_{\delta_{2}}(x)$ and $H^{i}_{\delta_{2}}(x)$
can be uniformly approximated on $\ov{B\cap\Om_{e}}$ by sums of
waves of the form (\ref{Eied0}). Note that by the well-posedness of
the direct problem we conclude that the electric far field pattern
depends continuously on the incoming wave. Based on this result we
obtain the assertion of the lemma.
\end{proof}

We are now ready to prove the main result of this section.

\begin{lemma}\label{uniS0}
Let $\Im k_{1}>0$. If the electric far field patterns
$E^{\infty}(\widehat{x},d,q)$ for $S_{0}$ and
$\wi{E}^{\infty}(\widehat{x},d,q)$ for $\wi{S}_{0}$ coincide for all
observation directions $\widehat{x}\in S^{2}$, all incident
directions $d\in S^{2}$ and all polarizations $q\in \R^{3}$, then
$S_{0}=\wi{S}_{0}$.
\end{lemma}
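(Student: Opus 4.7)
The plan is a contradiction argument in the spirit of H\"ahner \cite{Ha}. Suppose $S_0\neq\wi{S}_0$; the setup preceding the lemma then furnishes a point $x^{\ast}\in S_0\cap\pa\Om_e$ with $x^{\ast}\in\wi{\Om}_0$ and a ball $B_{\delta_1}$ centered at $x^{\ast}$ disjoint from $\ov{\wi{\Om}}$ (and, after shrinking $\delta_1$, also disjoint from $S_1$). The goal is to build a sequence of incoming waves to which Lemma \ref{newEi} applies, so that both scattering problems have the same far field pattern, and then to show that their total fields must behave incompatibly near $x^{\ast}$.

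To build the sequence, I would apply Lemma \ref{newEi} with densities $s_n\in T_d^{0,\al}(S_0)$ supported in $S_0\cap B_{\delta_1}$ and concentrated at $x^{\ast}$, chosen so that the associated incoming wave $E^i_n$ in (\ref{Ei1}) approximates the field of an electric dipole placed at a point $z_n\in\Om_0$ tending to $x^{\ast}$ from the $\Om_0$-side of $S_0$; the approximation argument inside the proof of Lemma \ref{newEi} makes this admissible. Since $\wi{S}_0$ and $S_1$ lie a positive distance from $B_{\delta_1}$, the boundary data $-\nu\times E^i_n,-\nu\times H^i_n$ stay uniformly bounded in $T_d^{0,\al}(\wi{S}_0)$ and in $T_d^{0,\al}(\G_1)\times T^{0,\al}(\G_2)$, whereas $|E^i_n|$ blows up at $x^{\ast}$. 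By Rellich's lemma the scattered fields satisfy $E^s_n=\wi{E}^s_n$ in $\Om_e$, so the total fields $E_n:=E^i_n+E^s_n,H_n:=H^i_n+H^s_n$ and $\wi{E}_n:=E^i_n+\wi{E}^s_n,\wi{H}_n:=H^i_n+\wi{H}^s_n$ agree there.

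For the $\wi{S}_0$-problem, the boundedness of the data together with Theorem \ref{well-posedness} produces uniform $C^{0,\al}$-bounds on $\wi{E}^s_n,\wi{H}^s_n,\wi{F}_n,\wi{G}_n$. Moreover $\wi{E}^s_n,\wi{H}^s_n$ are classical (hence real analytic) Maxwell solutions in all of $\wi{\Om}_0\supset S_0\cap B_{\delta_1}$, so they are smooth across that piece of $S_0$, and the blow-up of $\wi{E}_n,\wi{H}_n$ at $x^{\ast}$ comes entirely from the dipole pieces $E^i_n,H^i_n$. Combining $E_n=\wi{E}_n,H_n=\wi{H}_n$ on $\Om_e$ with the $S_0$-transmission condition (rewritten in total-field form $\nu\times E_n=\la_E\nu\times F_n$, $\nu\times H_n=\la_H\nu\times G_n$) gives $\la_E\nu\times F_n=\nu\times\wi{E}_n$ and $\la_H\nu\times G_n=\nu\times\wi{H}_n$ on $S_0\cap B_{\delta_1}$; the tangential traces of $F_n,G_n$ on $S_0$ therefore inherit the dipole blow-up at $x^{\ast}$ as $n\to\infty$.

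Finally, exploit the energy identity established in the proof of Theorem \ref{uni.direct}, which for total fields satisfying $\nu\times E=\la_E\nu\times F,\nu\times H=\la_H\nu\times G$ on $S_0$ and the boundary conditions on $S_1$ reads, when $\Im k_1>0$,
\[
\frac{2k_0\,\Re k_1\,\Im k_1}{|k_1|^2}\int_{\Om_1}|F_n|^2\,dx\le\Big|\Re\int_{S_0}\nu\times E_n\cdot\ov{H_n}\,ds\Big|.
\]
On the right-hand side, replace $E_n,H_n$ by $\wi{E}_n,\wi{H}_n$ on the portion of $S_0$ bordering $\Om_e$ and decompose into the singular dipole part $E^i_n,H^i_n$ and the uniformly bounded scattered part $\wi{E}^s_n,\wi{H}^s_n$; a direct computation with the explicit form of the Hertz dipole shows that the surface integral stays bounded in $n$. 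On the other hand, the forced trace blow-up of $F_n$ at $x^{\ast}$, combined with the continuous dependence of Theorem \ref{well-posedness}, drives $\|F_n\|_{L^2(\Om_1)}\to\infty$, violating the inequality and yielding $S_0=\wi{S}_0$. The principal obstacle is exactly this last balancing step: arranging $s_n$ so that the right-hand side remains bounded while the left blows up; this delicate cancellation is the heart of H\"ahner's construction in \cite{Ha}.
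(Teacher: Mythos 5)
Your setup (contradiction at a boundary point $x^{\ast}$, invoking Lemma \ref{newEi} and Rellich's lemma to transfer the data to the $\wi{S}_{0}$-problem, and exploiting that the $\wi{\phantom{S}}$-fields stay uniformly bounded near $x^{\ast}$) matches the paper, but the decisive step --- actually producing the contradiction --- is not carried out, and the two claims you rest it on are both unsubstantiated. First, the boundedness of $\Re\int_{S_{0}}\nu\times E_{n}\cdot\ov{H_{n}}\,ds$: with a dipole-type incident field centred at $z_{n}\to x^{\ast}$, the pure incident contribution does vanish (Green's theorem in $\Om$ with real $k_{0}$) and the pure scattered contribution is controlled by the far field, but the cross terms $\Re\int_{S_{0}}\{\nu\times E^{i}_{n}\cdot\ov{H^{s}_{n}}+\nu\times E^{s}_{n}\cdot\ov{H^{i}_{n}}\}\,ds$ pair a singularity of order $|x-z_{n}|^{-3}$ on $S_{0}$ against fields that are merely bounded; the integral of $|E^{i}_{n}|$ over $S_{0}$ already diverges like ${\rm dist}(z_{n},S_{0})^{-1}$, and you offer no cancellation mechanism for the real part. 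Second, the blow-up of $\|F_{n}\|_{L^{2}(\Om_{1})}$: continuous dependence (Theorem \ref{well-posedness}) gives upper bounds only, and pointwise blow-up of the tangential trace $\nu\times F_{n}$ at the single point $x^{\ast}$ does not force the volume $L^{2}$-norm to diverge. You acknowledge yourself that this ``balancing step'' is the heart of the matter --- which means the proof is not complete.

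The paper's mechanism is genuinely different and is worth internalizing. It does not use an energy inequality at all. The incident fields are surface potentials with densities $s_{n}(x)=\rho(x)|x-x_{n}|^{-1/2}\,\nu(x)\times p$, calibrated so that $s_{n}$ stays uniformly bounded in $T^{2}(S_{0})$ while $\Div s_{n}$ does not stay bounded in $L^{2}(S_{0})$ (the extra factor $|x-x^{\ast}|^{-1}$ from differentiating destroys square integrability on a two-dimensional surface). The argument then pushes uniform $L^{2}$-bounds through the integral-equation system (\ref{T1})--(\ref{divF=0}), through the mapping properties of $M_{0,0},M_{0,1}$ on $C^{2,\al}$ surfaces, and through the auxiliary interior impedance problem of Lemma \ref{3.3}, to conclude that $\Div(\nu\times G^{\ast}_{n})$ and ultimately $\Div s_{n}$ would have to be uniformly bounded in $L^{2}(S_{0})$; the explicit term $(\nu(x)\times p)\cdot(x-x^{\ast})/|x-x^{\ast}|^{5/2}$ shows this is false. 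Replacing this surface-divergence bookkeeping by a dipole singularity plus an energy estimate is essentially the Isakov/Kirsch--Kress route, which is precisely what H\"{a}hner's construction was designed to circumvent for Maxwell transmission problems; as written, your proposal does not close the gap.
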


\begin{proof}
We assume that $S_{0}\neq\wi{S}_{0}$. Then we can choose a point
$x^{\ast}$ as we did at the beginning of Section \ref{sec3}.
For the same $\delta_{1}$ as obtained there, we use a constant tangential vector
$p\neq0$ at $x^{\ast}$ and a smooth cut-off function $\rho$ that takes the value
one close to $x^{\ast}$ and zero for $x$ with $|x-x^{\ast}|\geq\delta_{1}/3$.
Define $x_{n}:=x^{\ast}+\delta_{1}/n\nu(x^{\ast})$, $n\in \N$. Denote by
$E_{n}, H_{n}, F_{n}, G_{n}$ the solution of the direct transmission
problem (\ref{0EH})-(\ref{0ibc}) for the incoming wave:
 \be\label{Ei0}
 E^{i}_{n}(x)&=&\la_{E}\curlcurl\int_{S_{0}}s_{n}(y)\Phi_{0}(x,y)ds(y),\cr
 H^{i}_{n}(x)&=&-ik_{0}\la_{E}\curl\int_{S_{0}}s_{n}(y)\Phi_{0}(x,y)ds(y),
   \qquad x\in B\cap\Om_{e},\;\;n\in\N,
 \en
where we set
 \ben
 s_{n}(x):=\frac{\rho(x)}{|x-x_{n}|^{1/2}}\nu(x)\times p,\qquad x\in S_{0},\;\;n\in \N.
 \enn
Then define
 \ben
 F^{\ast}_{n}(x)&:=&F_{n}(x)-\curlcurl\int_{S_{0}}s_{n}(y)\Phi_{1}(x,y)ds(y),\\
 G^{\ast}_{n}(x)&:=&G_{n}(x)+ik_{1}\curl\int_{S_{0}}s_{n}(y)\Phi_{1}(x,y)ds(y),
   \qquad x\in \Om_{1},\;\;n\in \N
 \enn
which satisfies the equations (\ref{0FG}). Since $s_{n}$ is
uniformly bounded in $T^{2}(S_{0})$ for all $n\in \N$, we have that
 \be
\label{T1ast} T^{\ast}_{1,n}(x)&:=&\nu\times E_{n}
             -\la_{E}\nu\times F^{\ast}_{n}=\la_{E}(N_{0,1}-N_{0,0})Rs_{n},\\
\label{T2ast} T^{\ast}_{2,n}(x)&:=&\nu\times
 H_{n}-\la_{E}\nu\times G^{\ast}_{n}=ik_{0}\la_{E}M_{0,0}s_{n}-ik_{1}\la_{H}M_{0,1}s_{n}
 \en
and $\Div T^{\ast}_{1,\;n}$ are uniformly bounded with respect to
the $L^{2}(S_{0})-$norm. Consider the integral equations (\ref{T1})-(\ref{divF=0})
in the space $T^{2}(S_{0})\times T^{2}(S_{0})\times T_{d}^{0,\al}(\G_{1})\times
T^{0,\al}(\G_{2})\times C^{0,\al}(\G_{2})$ with the right-hand sides
$T^{\ast}_{1,\;n}(x)$ and $T^{\ast}_{2,\;n}(x)$ defined as above and
 \ben
\label{T3ast} T^{\ast}_{3,n}(x)&:=&\nu\times F^{\ast}_{n}=-N_{1,1}Rs_{n},\qquad x\in \G_{1},\\
\label{T4ast} T^{\ast}_{4,n}(x)&:=&\nu\times G^{\ast}_{n}
          -\frac{\la}{k_{1}}(\nu\times F^{\ast}_{n})\times\nu
          =ik_{1}M_{2,1}s_{n}-\frac{\la}{k_{1}}RN_{2,1}Rs_{n},\quad x\in \G_{2}.
 \enn
It can be found that the densities $a_{n}^{\ast},b_{n}^{\ast},c_{n}^{\ast},d_{n}^{\ast}$
and $\psi_{n}^{\ast}$ are uniformly bounded with respect to the norm of the above space.
Using the regularity properties of the integral operators in (\ref{T1}) we
conclude that the tangential fields $a_{n}^{\ast}$ are even uniformly bounded
in $T_{d}^{2}(S_{0})$. Here, the condition $S_{0}\in C^{2,\al}$ is important since then
we are able to use the fact that $M_{0,0}$ and $M_{0,1}$ map $T^{2}(S_{0})$ continuously
into $T_{d}^{2}(S_{0})$ (see \cite{Kirsch89}). Let $B_{\delta_{1}/2}$ be
the ball centered at $x^{\ast}$ with radius $\delta_{1}/2$ and
define $\G^{\ast}:=S_{0}\cap B_{\delta_{1}/2}$ and $\G:=S_{0}\setminus B_{\delta_{1}/2}$.
Then we have that $\Div T^{\ast}_{2,\;n}$ is uniformly bounded in $L^{2}(\G)$.
Thus, it follows from (\ref{T2}) that $\Div b_{n}^{\ast}$ is uniformly bounded in $L^{2}(\G)$.
These results together with the solution representation (\ref{F}) imply that
$\nu\times F^{\ast}_{n}$ is uniformly bounded with respect to the $T_{d}^{2}(\G)-$norm.

By Lemma \ref{newEi} and Rellich's lemma $E_{n}$ and $H_{n}$ coincide with the scattered
fields from the structure with interface $\wi{S}_{0}$.
Since the incoming fields given by (\ref{Ei0}) are uniformly bounded on $\wi{S}_{0}$, $E_{n}$ and
$H_{n}$ together with their derivatives are uniformly bounded in
$B_{\delta_{1}/2}\subset \wi{\Om_{0}}$. Hence, by the equation (\ref{T1ast}) and the
regularity of $T^{\ast}_{1,\;n}$, $\nu\times F^{\ast}_{n}$ is also uniformly bounded
with respect to the $T_{d}^{2}(\G^{\ast})-$norm.

We now conclude by Lemma \ref{3.3} below that $\Div(\nu\times G^{\ast}_{n})$ remains
uniformly bounded in $L^{2}(S_{0})$ for all $n\in \N$.
This implies that $\Div [\la_{H}\nu\times G_{n}-\nu\times
H^{i}_{n}] =\la_{H}\Div (\nu\times G^{\ast}_{n})+\Div
[\la_{H}\nu\times (G_{n}-G^{\ast}_{n})-\nu\times H^{i}_{n}]$ is
uniformly bounded in $L^{2}(\G)-$norm since $s_{n}$ is compactly
supported in $B_{\delta_{1}/3}\cap S_{0}$ so the kernels
in $\la_{H}\nu\times (G_{n}-G^{\ast}_{n})-\nu\times H^{i}_{n}$ are smooth.
Furthermore, The function $\Div [\la_{H}\nu\times G_{n}-\nu\times H^{i}_{n}]
=\Div (\nu\times H_{n})$ is also uniformly bounded in $L^{2}(\G^{\ast})-$norm
since $H_{n}$ and its derivatives are uniformly bounded in $B_{\delta_{1}/2}$.
By the vector identity $\Div(\nu\times V)=-\nu\cdot\curl V$ we conclude
that the fields
 \ben
&&\Div[\la_{H}\nu\times (G_{n}-G^{\ast}_{n})-\nu\times H^{i}_{n}]\cr
&=&\Div[-ik_{1}\la_{H}\nu\times\curl\int_{S_{0}}s_{n}(y)\Phi_{1}(x,y)ds(y)\cr
 &&+ik_{0}\la_{E}\nu\times \curl\int_{S_{0}}s_{n}(y)\Phi_{0}(x,y)ds(y)]\cr
&=&-ik_{1}\la_{H}\nu\cdot\curlcurl\int_{S_{0}}s_{n}(y)\Phi_{1}(x,y)ds(y)\cr
 &&+ik_{0}\la_{E}\nu\cdot \curlcurl\int_{S_{0}}s_{n}(y)\Phi_{0}(x,y)ds(y)
 \enn
remains uniformly bounded in $L^{2}(S_{0})$. Thus, denoting by $\wi{\Phi}_{0}$
the fundamental solution of the Laplace equation, we have that
 \ben
&&\lim_{\eps\rightarrow0,\;\eps>0}\nu\cdot
\curlcurl\int_{S_{0}}s_{n}(y)\wi{\Phi}_{0}(x+\eps\nu(x),y)ds(y)\cr
&=&\lim_{\eps\rightarrow0,\;\eps>0}\frac{\pa}{\pa\nu(x)}{\rm div}
   \int_{S_{0}}s_{n}(y)\wi{\Phi}_{0}(x+\eps\nu(x),y)ds(y)
 \enn
is uniformly bounded in $L^{2}(S_{0})$. Here we have used the
identity $\curlcurl V=-\Delta V+\nabla{\rm div} V$ and the fact that
$k_{0}\la_{E}-k_{1}\la_{H}=k^{2}_{0}(1+k^{2}_{1}\la_{H}^{2})/({k_{1}\la_{H}})\neq0$
% \ben
% k_{0}\la_{E}-k_{1}\la_{H}=\frac{k^{2}_{0}(1+k^{2}_{1}\la_{H}^{2})}{k_{1}\la_{H}}
%=\frac{k^{2}_{0}[1+\la_{H}^{2}(\Re k_{1})^{2}-\la_{H}^{2}(\Im
%k_{1})^{2}+2i\Re k_{1}\Im k_{1}\la_{H}^{2}]}{k_{1}\la_{H}} \neq0
% \enn
since $k_{0},\; \Re k_{1},\; \Im k_{1}$ and $\la_{H}$ are positive constants.
From this we conclude that
 \ben
 f_{n}(x)=\Div s_{n}(x)+2\int_{S_{0}}\Div s_{n}(y)
   \frac{\pa \Phi_{1}}{\pa\nu(x)}(x,y)ds(y),\qquad x\in S_{0}
 \enn
is uniformly bounded in $L^{2}(S_{0})$. Consequently, $\Div s_{n}$ remains uniformly
bounded in $L^{2}(S_{0})$ for all $n\in \N$ since the above integral equation is
uniquely solvable in $L^{2}(S_{0})$ and the inverse operator is continuous in $L^2(S_0)$.
Computing $\Div s_{n}$ and omitting the terms that are obviously uniformly bounded
in $L^{2}(S_{0})$ we have that
 \ben
 \frac{(\nu(x)\times p)\cdot(x-x^{\ast})}{|x-x^{\ast}|^{5/2}},\qquad x\in S_{0}
 \enn
is uniformly bounded in $L^{2}(S_{0})$. This is a contradiction as
can be seen by parameterizing $S_{0}$ locally around $x^{\ast}$.
This ends the proof.
\end{proof}

It remains to prove the following lemma.

\begin{lemma}\label{3.3}
Let $\Im k_{1}>0$. Assume that $T_{1,n}\in T_{d}^{0,\al}(S_{0})$ is uniformly bounded
in  $T_{d}^{2}(S_{0})$, $T_{3,n}$ is uniformly bounded in  $T_{d}^{0,\al}(\G_{1})$
and $T_{4,n}$ is uniformly bounded in  $T^{0,\al}(\G_{2})$ for all $n\in\N$.
Then there exists a unique solution $F^{\ast}_n,\; G^{\ast}_n\in C^1(\Om_1)\cap C(\ov{\Om_1})$
to the Maxwell equations
 \ben
 \curl F^{\ast}_n-ik_1G^{\ast}_n=0,\qquad\curl G^{\ast}_n+ik_1F^{\ast}_n=0\qquad {\rm in}\;\;\Om_1
 \enn
with boundary conditions
 \ben
 \nu\times F^{\ast}_{n}=T_{1,n}\qquad&& {\rm on}\;\; S_{0},\\
 \nu\times F^{\ast}_{n}=T_{3,n}\qquad&& {\rm on}\;\; \G_{1},\\
 \nu\times G^{\ast}_n-\frac{\la}{k_1}[\nu\times F^{\ast}_n]\times\nu=T_{4,n}\qquad&& {\rm on}\;\;\G_2
 \enn
for each $n\in \N$. Furthermore, $\Div (\nu\times G^{\ast}_{n})$ is
uniformly bounded in $L^{2}(S_{0})$ for all $n\in \N$.
\end{lemma}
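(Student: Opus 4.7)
The plan is to treat the interior mixed Maxwell problem in $\Om_1$ by the same integral-equation machinery used in Theorem \ref{well-posedness}, then reduce the $L^2$ bound on $\Div(\nu\times G_n^*)$ to a uniform $L^2$ bound on the normal trace of $F_n^*$ on $S_0$, and finally read off that bound from the mapping properties of the layer-potential operators applied to data of $T_d^2(S_0)$-type.

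First I would seek $F_n^*, G_n^*$ as a superposition of electric and magnetic dipole layer potentials on $S_0$, $\G_1$, $\G_2$, together with the auxiliary gradient/single-layer pieces on $\G_2$ exactly as in the ansatz (\ref{F})--(\ref{G}), so that $\Div F_n^*=0$ is enforced on $\G_2$. The three boundary conditions and the divergence condition then yield a system of Fredholm integral equations of the second kind in the product space $T_d^{0,\al}(S_0)\times T_d^{0,\al}(\G_1)\times T^{0,\al}(\G_2)\times C^{0,\al}(\G_2)$, structurally identical to (\ref{T1})--(\ref{divF=0}) except that the exterior block (the $\Om_0$ part involving $E,H$ and $k_0$) is absent. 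The diagonal part is invertible, the off-diagonal part compact, and Riesz-Fredholm reduces existence to uniqueness of the homogeneous problem. For uniqueness I would apply Green's vector theorem to $F_n^*,\ov{G_n^*}$ in $\Om_1$ just as in Theorem \ref{uni.direct}: the $S_0$ and $\G_1$ boundary terms vanish (since $\nu\times F_n^*=0$ there), the $\G_2$ term yields a nonpositive multiple of $\int_{\G_2}\la|\nu\times F_n^*|^2ds$, and the volume term is $i\int_{\Om_1}(k_1|G_n^*|^2-\ov{k_1}|F_n^*|^2)dx$; taking the imaginary part and using $\Im k_1>0$ forces $F_n^*=G_n^*=0$ in $\Om_1$ (no need for Rellich here since the domain is bounded).

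Second, for the uniform $L^2(S_0)$ estimate on $\Div(\nu\times G_n^*)$, I would invoke the surface identity $\Div_{S_0}(\nu\times V)=-\nu\cdot\curl V$ together with $\curl G_n^*=-ik_1F_n^*$ to obtain
\ben
\Div_{S_0}(\nu\times G_n^*)=ik_1\,\nu\cdot F_n^*\qquad\mbox{on}\;S_0,
\enn
which reduces the task to proving a uniform $L^2(S_0)$ bound on the normal trace $\nu\cdot F_n^*$. This bound is obtained by tracing through the layer-potential ansatz with data generated by $T_{1,n}$, $T_{3,n}$, $T_{4,n}$, exactly in the spirit of the argument in the proof of Lemma \ref{uniS0}: since $T_{1,n}$ is uniformly bounded in $T_d^2(S_0)$ and the remaining data are uniformly bounded in their Hölder classes, the key continuity property that $M_{0,j}$ and $N_{0,j}$ map $T^2(S_0)$ continuously into $T_d^2(S_0)$ (cf. \cite{Kirsch89}; $S_0\in C^{2,\al}$ is crucial here) propagates the $L^2$-type regularity from the right-hand sides to the densities $a_n^*$ on $S_0$. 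Inspection of the ansatz (\ref{F}) then gives $\nu\cdot F_n^*$ bounded in $L^2(S_0)$ uniformly in $n$.

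The main obstacle I foresee is precisely this last mixed-regularity step. The Fredholm system (\ref{T1})--(\ref{divF=0}) is naturally posed in the Hölder framework, while the hypothesis only furnishes uniform $L^2$-type control on $T_{1,n}$; thus one must verify that the inverse of the Fredholm system, known to be bounded on the Hölder space, is also bounded on the mixed space $T_d^2(S_0)\times T_d^{0,\al}(\G_1)\times T^{0,\al}(\G_2)\times C^{0,\al}(\G_2)$. The technical content lies in decomposing each operator $L_i,M_i,N_i,P_i,Q_i$ restricted to $S_0$ into pieces that either improve regularity (smooth kernels over $\G_1,\G_2$) or preserve the $T_d^2$ class by the Kirsch mapping property. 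Once these uniform mapping properties are in hand, the identity $\Div_{S_0}(\nu\times G_n^*)=ik_1\nu\cdot F_n^*$ delivers the desired uniform $L^2(S_0)$ bound.
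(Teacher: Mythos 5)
Your overall strategy coincides with the paper's: a layer-potential ansatz reduced by Riesz--Fredholm, an energy identity exploiting $\Im k_1>0$ for uniqueness, the identity $\Div(\nu\times G^{\ast}_n)=ik_1\,\nu\cdot F^{\ast}_n$, and an $L^2$ mapping property of the potentials for the final bound. There are, however, two genuine gaps. The first is that the Fredholm alternative requires triviality of the homogeneous \emph{integral} system, not merely uniqueness for the homogeneous boundary value problem in $\Om_1$; your Green's-identity argument only shows that the potential vanishes in $\Om_1$, and to conclude that the densities vanish one must examine the same potentials from the other side of each surface. The paper does this explicitly: the combined-layer structure on $\G_1,\G_2$ (the $i\la$ terms and the operators $\widehat{S}_{j}^{2}$) forces $c_n=d_n=\psi_n=0$ via Green's theorem in $\Om_2$, and then the remaining potential generated by $a_n$ is a radiating solution in $\Om_0$ with vanishing tangential trace on $S_0$, hence zero by Theorem 6.18 of \cite{CK}, so the jump relations $-a_n+M_{0,1}a_n=0=a_n+M_{0,1}a_n$ give $a_n=0$. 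A smaller slip in the same part: with your normalization the volume term is $i\int_{\Om_1}(k_1|G^{\ast}_n|^2-\ov{k_1}|F^{\ast}_n|^2)\,dx$ while the $\G_2$ term carries the complex factor $-\la/\ov{k_1}$, so it is the \emph{real} part of the identity (as in the paper, after multiplying through by $k_1$), not the imaginary part, that has a definite sign and yields $F^{\ast}_n=G^{\ast}_n=0$.

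The second gap is the assertion that ``inspection of the ansatz'' gives $\nu\cdot F^{\ast}_n$ uniformly bounded in $L^2(S_0)$. This step rests on a specific analytic ingredient, namely the $L^2(S_0)$-boundedness of the normal trace of $\curl\int_{S_0}a(y)\Phi_1(\cdot,y)\,ds(y)$ for densities $a$ merely in $T^{2}(S_{0})$; the paper cites Lemma 1 of \cite{Ha} for exactly this, and without it the conclusion does not follow. Relatedly, the mixed-regularity difficulty you flag as the main obstacle is resolved in the paper simply by posing the system (\ref{T1n})--(\ref{divF=0n}) from the outset in $Y=T^{2}(S_{0})\times T_{d}^{0,\al}(\G_{1})\times T^{0,\al}(\G_{2})\times C^{0,\al}(\G_{2})$, where it is already Fredholm with bounded inverse, so the densities are uniformly bounded in $Y$ directly; the Kirsch mapping property \cite{Kirsch89} you invoke is what is needed in the proof of Lemma \ref{uniS0}, not here. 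Note finally that since only one tangential condition is prescribed on $S_0$ for the interior problem, the ansatz (\ref{F1}) carries a single density $a_n$ on $S_0$ (the $\curlcurl$ term of (\ref{F}) is dropped); with two densities on $S_0$, as your description of the ansatz suggests, the system would not be square.
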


\begin{proof}
We first prove the uniqueness result. Let $T_{1,n}=0$ on $S_{0}$, $T_{3,n}=0$ on $\G_{1}$
and $T_{4,n}=0$ on $\G_2$. Then we just need to prove that $F^{\ast}_n=G^{\ast}_n=0$ in $\Om_1$.
Using Green's vector theorem and the above Maxwell equations, we have
 \ben
0&=&k_{1}\int_{\Om_{1}}\{\curl G^{\ast}_{n}\cdot
\ov{F^{\ast}_{n}}+ik_{1}F^{\ast}_{n}\cdot \ov{F^{\ast}_{n}}\}dx\cr
&=&k_{1}\int_{\Om_{1}}\{G^{\ast}_{n}\cdot\curl\ov{F^{\ast}_{n}}+ik_{1}|F^{\ast}_{n}|^{2}\}dx
  -\int_{\G_{2}}\la |\nu\times F^{\ast}_{n}|^{2}ds\cr
&=&\int_{\Om_{1}}\{-i|k_{1}G^{\ast}_{n}|^{2}+i[(\Re k_{1})^{2}
  -(\Im k_{1})^{2}]|F^{\ast}_{n}|^{2}-2\Re k_{1}\Im k_{1}|F^{\ast}_{n}|^{2}\}dx
  -\int_{\G_{2}}\la |\nu\times F^{\ast}_{n}|^{2}ds
 \enn
Taking the real part of the above equation and noting that $\Im k_{1}>0$,
we get $F^{\ast}_{n}=G^{\ast}_{n}=0$ in $\Om_{1}$.

To prove the existence, we seek a solution in the form
 \be
\label{F1}F^{\ast}_{n}(x)&=&\curl\int_{S_{0}}a_{n}(y)\Phi_{1}(x,y)ds(y)\cr
 &&+\curl\int_{\G_{1}}c_{n}(y)\Phi_{1}(x,y)ds(y)
 +\frac{i}{k_{1}^{2}}\curlcurl\int_{\G_{1}}\nu(y)
 \times(\widehat{S}_{1}^{2}c_{n})(y)\Phi_{1}(x,y)ds(y)\cr
 &&+\int_{\G_{2}}d_{n}(y)\Phi_{1}(x,y)ds(y)
   +i\la\curl\int_{\G_{2}}\nu(y)\times(\widehat{S}_{2}^{2}d_{n})(y)\Phi_{1}(x,y)ds(y)\cr
 &&+{\rm grad}\int_{\G_{2}}\psi_{n}(y)\Phi_{1}(x,y)ds(y)
   +i\la\int_{\G_{2}}\nu(y)\psi_{n}(y)\Phi_{1}(x,y)ds(y),\\
\label{G1}G^{\ast}_{n}(x)&=&\frac{1}{ik_{1}}\curl F^{\ast}_{n}(x)
 \en
for $x\in \Om\setminus S_{1}$. The jump relations together with the regularity of the surface
potentials imply that $F^{\ast}_{n},\; G^{\ast}_{n}$ defined in (\ref{F1})-(\ref{G1}) solve
the mixed boundary problem in $\Om_1$ provided the densities
$a_{n},\;c_{n},\;d_{n},\;\psi_{n}$ satisfy
 \be
\label{T1n}-a_{n}+M_{0,1}a_{n}-\frac{1}{\la_{E}}N_{1}c_{n}-\frac{1}{\la_{E}}P_{1}d_{n}
  -\frac{1}{\la_{E}}Q_{1}\psi_{n}&=&2T_{1,n}\qquad {\rm on}\;\;S_{0}, \\
\label{T3n}c_{n}+L_{3}a_{n}+N_{3}c_{n}+P_{3}d_{n}
           +Q_{3}\psi_{n}&=&2T_{3,n}\qquad{\rm on}\;\;\G_{1},\\
\label{T4n}d_{n}+L_{4}a_{n}+N_{4}c_{n}+P_{4}d_{n}
           +Q_{4}\psi_{n}&=&2T_{4,n}\qquad{\rm on}\;\;\G_{2},\\
\label{divF=0n}i\la\psi_{n}+P_{5}d_{n}+Q_{5}\psi_{n}&=&0\qquad\qquad{\rm on}\;\;\G_{2}.
 \en
This system of integral equations is Fredholm in $Y:=T^{2}(S_{0})\times T_{d}^{0,\al}(\G_{1})\times
T^{0,\al}(\G_{2})\times C^{0,\al}(\G_{2})$. Therefore we just need
to prove that the corresponding homogeneous system has a trivial solution.
%of integral equations (\ref{T1n})-(\ref{divF=0n}) has a trivial solution.
Similar to the argument in the proof of Theorem \ref{well-posedness}, we can
prove that $\psi_{n}=0$ on $\G_{2}$, $c_{n}=0$ on $\G_{1}$ and $d_{n}=0$ on $\G_{2}$.
Note that $F^{\ast}_{n}, G^{\ast}_{n}$ are also well-defined in $\Om_{0}$ and
are a radiation solution to the Maxwell equations
 \ben
 \curl F^{\ast}_n-ik_1G^{\ast}_n=0,\qquad\curl G^{\ast}_n+ik_1F^{\ast}_n=0\qquad\mbox{in}\;\;\Om_0
 \enn
with the homogenous boundary condition
 \ben
 \nu\times F^{\ast}_{n}=0 \qquad \mbox{on}\;\; S_{0}.
 \enn
Thus, by Theorem 6.18 in \cite{CK} we obtain that $F^{\ast}_{n}=0$ in $\Om_{0}$.
By the jump relations, we have
 \ben
 -a_{n}+M_{0,1}a_{n}=0=a_{n}+M_{0,1}a_{n}\qquad {\rm on}\;\; S_{0},
 \enn
so $a_{n}=0$ on $S_{0}$.

%With the help of the integration equation (\ref{T1n}) it is deduced that
%$a\in T_{d}^{2}(S_{0})$.
Since the right-hand side of the system of integral equations (\ref{T1n})-(\ref{divF=0n}) is bounded
in $Y$, then the solution $a,c,d,\psi$ is also bounded in $Y$. Thus, using the integral representation
(\ref{F1}) and by Lemma 1 in \cite{Ha},
$\Div (\nu\times G^{\ast}_{n})=-\nu\cdot \curl G^{\ast}_{n}=ik_{1}\nu\cdot F^{\ast}_{n}$
is uniformly bounded in $L^{2}(S_{0})$ for all $n\in\N$. This proves the lemma.
\end{proof}

%%%%%%%%%%%%%%%%%%%%%%%%%%%%%%%%%%%%%%%%%%%%%%%%%%%%%%%%%%%%%%%%%%%%%%%%%%%%%%%%%%%%%%%%%%%
%%%%%%%%%%%%%%%%%%%%%%%%%%%%%%%%%%%%%%%%%%%%%%%%%%%%%%%%%%%%%%%%%%%%%%%%%%%%%%%%%%%%%%%%%%%

\section{Unique determination of the boundary $S_1$ and its property}\label{sec4}
\setcounter{equation}{0}

In this section we will prove that, given that $S_{0}=\wi{S}_{0}$, the impenetrable obstacle
$\Om_{2}$ and its physical property $\mathscr{B}$ can be uniquely determined.
Note that in this problem we only assume that the wave number $k_{1}$ can be any constant
with $\Im k_{1}\geq0$, so the result obtained in this section is a generalization of
that in \cite{LZaa}.

\begin{lemma}\label{Esequal}
For $\Om_2,\wi{\Om}_2\subset\Om$, let $G$ be the unbounded component
of $\R^3\setminus(\ov{\Om_2\cup\wi{\Om}_2})$ and let
$E^{\infty}(\hat{x};d,q)=\wi{E}^{\infty}(\hat{x};d,q)$ for all
$\hat{x},d\in S^2,\;q\in \R^{3}$ with $\wi{E}^{\infty}(\hat{x};d,q)$
being the electric far field pattern of the scattered field
$\wi{E}^{s}(x;d,q)$ corresponding to the obstacle $\wi{\Om}_2$ and
the same incident plane wave $E^{i}(\hat{x};d,q)$. Let
$z\in\Om_1\cap G,$ $E^i=E^i(x;z,p),\;H^i=H^i(x;z,p)$ and let
$F=F(x;z,p)$ be the unique solution of the problem
 \be
 \label{4EH} \curl E-ik_0 H=0\;\;\curl H+ik_0 E=0\qquad&{\rm in}\;\;\Om_{0},\\
 \label{4FG} \curl F-ik_1 G=0\;\;\curl G+ik_1 F=0\qquad&{\rm in}\;\;\Om_{1},\\
 \label{4tbc}\nu\times E-\la_{E}\nu\times F=\la_{E}\nu\times E^i,\;\;
             \nu\times H-\la_{H}\nu\times G=\la_{H}\nu\times H^i \qquad&{\rm on}\;\;S_{0},\\
 \label{4pbc}\nu\times F=-\nu\times E^i\qquad&{\rm on}\;\;\G_{1},\\
 \label{4ibc}\nu\times G-\frac{\la}{k_{1}}(\nu\times F)\times\nu
     =-\nu\times H^i+\frac{\la}{k_{1}}(\nu\times E^i)\times\nu\qquad&{\rm on}\;\;\G_{2},\\
 \label{4rc} \lim_{|x|\rightarrow\infty}(H\times x-|x|E)=0.\qquad&
 \en
Let $\wi{F}$ =$\wi{F}(x;z,p)$ be the unique solution of the above
problem with $\Om_2$ replaced by $\wi{\Om_2}$ and $\Om_1$ replaced
by $\wi{\Om_1}:=\Om\ba(\ov{\Om_1})$. Then
 \be
 \label{eses} F(x;z,p)=\wi{F}(x;z,p),\qquad\forall\; x\in \Om_1\cap G.
 \en
\end{lemma}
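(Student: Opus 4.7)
The strategy is to bridge the hypothesis (equality of plane-wave far-field patterns) and the conclusion (equality of dipole-source transmitted fields) by chaining two Rellich--Holmgren arguments, linked together by the mixed reciprocity relation (\ref{mixz1}). For the first link, I would work with plane-wave incidence. Since $S_0=\wi{S}_0$ is assumed, both scattered fields $E(\cdot,d,q)$ and $\wi{E}(\cdot,d,q)$ live in the common exterior $\Om_0$ and share the same far-field pattern, so Rellich's lemma gives $E(\cdot,d,q)=\wi{E}(\cdot,d,q)$ in $\Om_0$. Subtracting the transmission conditions (\ref{0tr}) for the two configurations on $S_0$ (in which the common incident plane wave cancels) yields $\nu\times F(\cdot,d,q)=\nu\times\wi{F}(\cdot,d,q)$ and $\nu\times G(\cdot,d,q)=\nu\times\wi{G}(\cdot,d,q)$ on $S_0$. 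Since $F,\wi{F}$ both satisfy the constant-coefficient Maxwell system on $\Om_1\cap G=\Om\cap G$ and carry matching tangential Cauchy data on an open piece of $S_0$, Holmgren's uniqueness theorem propagates the equality throughout $\Om_1\cap G$. In particular $F(z,d,q)=\wi{F}(z,d,q)$ for every $d\in S^2$ and $q\in\R^3$.

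For the second link, I would apply (\ref{mixz1}) to both configurations at the fixed point $z\in\Om_1\cap G$:
$$
4\pi q\cdot E^{\infty}(-d;z,p)=\la_E\la_H\,p\cdot F(z,d,q),\qquad 4\pi q\cdot\wi{E}^{\infty}(-d;z,p)=\la_E\la_H\,p\cdot\wi{F}(z,d,q),
$$
so that the plane-wave equality just obtained forces $E^{\infty}(\hat{x};z,p)=\wi{E}^{\infty}(\hat{x};z,p)$ for all $\hat{x}\in S^{2}$ and $p\in\R^{3}$, i.e., the dipole-source far-field patterns coincide. A second invocation of Rellich's lemma then gives $E(\cdot;z,p)=\wi{E}(\cdot;z,p)$ throughout $\Om_{0}$. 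Subtracting the transmission conditions (\ref{4tbc}) for the two dipole problems on $S_0$ (where the incident dipole fields $E^i(\cdot;z,p),H^i(\cdot;z,p)$ are identical on both sides) delivers $\nu\times F(\cdot;z,p)=\nu\times\wi{F}(\cdot;z,p)$ and $\nu\times G(\cdot;z,p)=\nu\times\wi{G}(\cdot;z,p)$ on $S_0$. Since $F(\cdot;z,p)$ and $\wi{F}(\cdot;z,p)$ are regular throughout their respective domains (the dipole singularity lives entirely in the separately prescribed $E^i$ term, not in $F$), a final application of Holmgren's uniqueness theorem in $\Om_1\cap G$ with vanishing Cauchy data on $S_0$ yields $F(x;z,p)=\wi{F}(x;z,p)$ for all $x\in\Om_1\cap G$, which is precisely (\ref{eses}).

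The main conceptual obstacle is recognizing that (\ref{mixz1}) is the natural bridge that converts a plane-wave far-field statement into a dipole-source statement by interchanging the roles of source and observation; beyond that the argument is a twofold Rellich--Holmgren bootstrap. The only technical point to keep in mind is that Holmgren's theorem applies here because $S_0\in C^{2,\alpha}$ is amply analytic and $\Om_1\cap G$ is an open set adjacent to $S_0$ on a relatively open piece, so the vanishing of Cauchy data on that piece propagates into the component of $\Om_1\cap G$ touching $S_0$; the point $z$ lies in this component since $z\in\Om_1\cap G$ is in the connected layer between $S_0$ and $\partial(\Om_2\cup\wi{\Om}_2)$.
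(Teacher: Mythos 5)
Your argument is correct and follows essentially the same route as the paper: Rellich plus the transmission conditions and Holmgren give $F(z,d,q)=\wi{F}(z,d,q)$ for all $d,q$, the mixed reciprocity relation (\ref{mixz1}) converts this into equality of the dipole far-field patterns, and a second Rellich--Holmgren step yields (\ref{eses}). The only difference is that you spell out the intermediate traces on $S_0$ more explicitly than the paper does.
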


\begin{remark}\label{r3.4} {\rm
By Theorem \ref{well-posedness}, the problem (\ref{4EH})-(\ref{4rc})
has a unique solution.
}
\end{remark}

\begin{proof}
By Rellich's Lemma \cite{CK}, the assumption that
$E^{\infty}(\hat{x};d,q)=\wi{E}^{\infty}(\hat{x};d,q)$ for all
$\hat{x},d\in S^2,\;q\in \R^{3}$ implies that
 \ben
  \nu\times E(x,d,q)=\nu\times\wi{E}(x,d,q),\qquad
  \nu\times H(x,d,q)=\nu\times\wi{H}(x,d,q)
 \enn
for all $x\in S_0$ and all $d\in S^2,\;q\in \R^{3}$.
By Holmgren's uniqueness theorem (see Lemma 3.2 in \cite{AN} or
Theorem 4.1.2.4 in \cite{Kress001}), it follows that
 \ben
 \wi{F}(z,d,q)=F(z,d,q),\qquad\forall \;d\in S^2,\;q\in \R^{3}.
 \enn
For the electric far field patterns corresponding to the electric dipole
we have by the mixed reciprocity relation (\ref{mixz1}) that
 \ben
E^\infty(d;z,p)=\wi{E}^\infty(d;z,p),\qquad\forall \;d\in S^2,\;q\in
\R^{3}.
 \enn
Rellich's Lemma \cite{CK} gives
 \ben
\nu\times E(x;z,p)=\nu\times \wi{E}(x;z,p),\quad
\nu\times H(x;z,p)=\nu\times \wi{H}(x;z,p)
 \enn
for all $x\in S_0,\;p\in \R^{3}$. By Holmgren's uniqueness theorem again
%(see Lemma 3.2 in \cite{AN} or Theorem 4.1.2.4 in \cite{Kress001}),
it is derived that
 \ben \wi{F}(x;z,p)=F(x;z,p),\qquad\forall\; x\in\Om_1\cap G,\;p\in\R^3,
 \enn
which is the desired result (\ref{eses}).
\end{proof}

Making use of Lemma \ref{Esequal} and the mixed reciprocity relation (\ref{mixz1})
we can prove the following uniqueness result.

\begin{theorem}\label{th4.1}
If there are two obstacles $\Om_{2}$ and $\wi{\Om}_{2}$ which lead
to the same far field pattern for all observation directions and all
incident directions at a fixed frequency, i.e.,
$E^{\infty}(\widehat{x},d,q)=\wi{E}^{\infty}(\widehat{x},d,q)$ for
all $\widehat{x},\;d\in S^2$ and all $q\in \R^{3}$, then
$S_{1}=\wi{S}_{1}$ and $\mathscr{B}=\wi{\mathscr{B}}$, that is, the
impenetrable obstacle with its physical property are uniquely determined.
\end{theorem}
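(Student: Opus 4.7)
The plan is to establish $S_1=\wi{S}_1$ first and then the identity of the boundary decompositions $\mathscr{B}=\wi{\mathscr{B}}$, both by contradiction. Each step starts from Lemma \ref{Esequal} (which itself uses the mixed reciprocity (\ref{mixz1})) and is the electromagnetic analogue, in the piecewise homogeneous setting, of the Kirsch--Kress point-source blow-up argument used in \cite{LZsiam} for the scalar case and in \cite{LZaa} for the electromagnetic case with $k_0=k_1$.

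For the first claim, suppose $S_1\neq\wi{S}_1$. Since $S_0=\wi{S}_0$ by Section \ref{sec3}, we may choose a point $x^{\ast}\in S_1\cap\pa G$ which lies strictly inside $\wi{\Om}_1$, and set $z_n:=x^{\ast}+(1/n)\nu(x^{\ast})\in\Om_1\cap G$, where $\nu(x^{\ast})$ is the unit normal to $S_1$ pointing from $\Om_2$ into $\Om_1$. Fix $p\in\R^3$ and apply Lemma \ref{Esequal} with an electric dipole located at $z_n$; it yields $F(x;z_n,p)=\wi{F}(x;z_n,p)$ for every $x\in\Om_1\cap G$ and every $n\in\N$. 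Because $x^{\ast}$ has positive distance from $S_0\cup\wi{S}_1$, the data $T_1,\ldots,T_4$ for the $\wi{F}$-problem, all built from $E^i(\cdot;z_n,p)$ and $H^i(\cdot;z_n,p)$ evaluated on $S_0\cup\wi{S}_1$, are uniformly bounded in their H\"older norms as $z_n\to x^{\ast}$, so Theorem \ref{well-posedness} produces a uniform bound on $\wi{F}(\cdot;z_n,p)$ on $\ov{\Om_1\cap G\cap B_{\delta}(x^{\ast})}$. For the $F$-problem, by contrast, the datum on $S_1\cap B_{\delta}(x^{\ast})$ is $-\nu\times E^i(\cdot;z_n,p)$ on $\G_1$ or the corresponding impedance expression on $\G_2$, whose $L^{\infty}$ norm tends to $\infty$ as $z_n\to x^{\ast}\in S_1$ on account of the cubic singularity of the dipole field. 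Continuity of $F$ up to $S_1$ (from Theorem \ref{well-posedness}) then forces $\|F(\cdot;z_n,p)\|_{L^{\infty}(\Om_1\cap G\cap B_{\delta}(x^{\ast}))}\to\infty$, contradicting the identity $F=\wi{F}$ and the uniform bound just obtained.

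For the second claim, assume $S_1=\wi{S}_1$ but $\mathscr{B}\neq\wi{\mathscr{B}}$. Then, without loss of generality, there exists a relatively open piece $V\subset \G_1\cap\wi{\G}_2$ of $S_1$. Now $\Om_2=\wi{\Om}_2$, $\Om_1=\wi{\Om}_1$ and $\Om_1\cap G=\Om_1$, so Lemma \ref{Esequal} yields $F(x;z,p)=\wi{F}(x;z,p)$ throughout $\Om_1$; equivalently, the total fields $F^{tot}:=F+E^i$ and $G^{tot}:=G+H^i$ coincide with their tilde counterparts in $\Om_1$. On $V$ the same total field satisfies simultaneously the PEC condition $\nu\times F^{tot}=0$ inherited from $\mathscr{B}$ and the impedance condition $\nu\times G^{tot}-(\la/k_1)(\nu\times F^{tot})\times\nu=0$ inherited from $\wi{\mathscr{B}}$; together these give $\nu\times F^{tot}=\nu\times G^{tot}=0$ on the open set $V\subset S_1$. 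Holmgren's uniqueness theorem for the Maxwell system \cite{Kress001} then forces $F^{tot}\equiv G^{tot}\equiv 0$ in $\Om_1\setminus\{z\}$, which is impossible since $F^{tot}$ inherits the cubic singularity of $E^i(\cdot;z,p)$ at the interior point $z\in\Om_1$.

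The principal technical obstacle is the blow-up/boundedness dichotomy in the first step. The uniform bound on $\wi{F}(\cdot;z_n,p)$ is a direct consequence of Theorem \ref{well-posedness}, but one must check that the singular behavior of the boundary datum on $S_1\cap B_{\delta}(x^{\ast})$ really transfers to a pointwise blow-up of $F(\cdot;z_n,p)$ at interior points of $\Om_1\cap G$ arbitrarily close to $x^{\ast}$, rather than being absorbed by a cancellation with the far part of the surface data. This is the step where one must exploit continuity of $F$ up to $S_1$, the $C^{2,\al}$-regularity of $S_1$, and the explicit singular form of the dipole field.
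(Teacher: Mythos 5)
Your argument for $S_{1}=\wi{S}_{1}$ is essentially the paper's: both use Lemma \ref{Esequal} to identify $F(\cdot;z_n,p)$ with $\wi{F}(\cdot;z_n,p)$ on $\Om_1\cap G$ and then play the boundedness of the $\wi{F}$-problem (point source far from $\wi{S}_1$) against the blow-up of the boundary datum $-\nu\times E^{i}(\cdot;z_n,p)$ on $S_1$ near $x^{\ast}$; the paper phrases the contradiction by evaluating $\mathscr{B}$ at $z_0=x^{\ast}$ rather than via sup-norms, but this is the same mechanism. One small point you should add: for the tangential trace $\nu\times E^{i}(x^{\ast};z_n,p)$ to actually blow up you must choose $p$ not parallel to $\nu(x^{\ast})$ (the paper takes $p\perp\nu(z_0)$), since the leading $O(n^{3})$ term of the dipole field at $x^{\ast}$ is proportional to $3(\nu\cdot p)\nu-p$ and its cross product with $\nu$ vanishes when $p\parallel\nu$. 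For $\mathscr{B}=\wi{\mathscr{B}}$ your route genuinely differs from the paper's: you treat the case of mismatched decompositions ($V\subset\G_1\cap\wi{\G}_2$) and derive the contradiction locally from the point-source singularity at $z\in\Om_1$ after Holmgren, whereas the paper treats the case of equal decompositions with different impedance constants $\la\neq\wi{\la}$, works with plane-wave fields, propagates $F=G=0$ through $S_0$ into $\Om_0$, and contradicts the nonvanishing modulus of $E^{i}$ at infinity. Your version buys a shorter contradiction (no need to cross the interface $S_0$ or argue at infinity), but your ``without loss of generality'' is not one: the sub-case $\G_1=\wi{\G}_1$, $\G_2=\wi{\G}_2$, $\la\neq\wi{\la}$ is not reducible to yours and must be handled separately (it is, however, immediate in your framework, since $(\la-\wi{\la})\,\nu\times F^{tot}=0$ on $\G_2$ again yields vanishing Cauchy data and the same singularity contradiction); symmetrically, the paper leaves the decomposition-mismatch case to ``can be dealt with similarly.''
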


\begin{proof}
Let G be the unbounded component of
$\R^3\setminus(\ov{\Om_2\cup\wi{\Om}_2})$. Assume that $S_{1}\neq\wi{S}_{1}$.
Then, without loss of generality, we may assume that there exists a point
$z_0\in S_1\cap\left(\R^3\ba\ov{\widetilde{\Om}_2}\right)$. We can choose
an $h>0$ such that the sequence
 \ben
z_{j}:=z_{0}+\frac{h}{j}\nu(z_{0}), \qquad j=1,2,\ldots,
 \enn
is contained in $G$, where $\nu(z_{0})$ is the outward normal to $S_{1}$ at $z_{0}$.
Consider the solution to the problem (\ref{4EH})-(\ref{4rc}) with $z$ being replaced
by $z_{j}$. By Lemma \ref{Esequal} it follows that $E^s(x;z_j,p)=\wi{E}^s(x;z_j,p)$ for all
$x\in\ov{G}$ and all polarizations $p\in\R^{3}$.
Since $z_0$ has a positive distance from $\ov{\wi{\Om}_2}$,
we conclude from the well-posedness of the direct problem that there exists a $C>0$ such
that $|\mathscr{B}(\wi{E}^s(z_0;z_j,p))|\leq C$ uniformly for $j\geq1$ and
all polarizations $p\in \R^{3}$. On the other hand, by the boundary condition on $S_1$,
 \ben
|\mathscr{B}(\wi{E}^s(z_0;z_j,p))|=|\mathscr{B}(E^s(z_0;z_j,p))|
  =|-\mathscr{B}(E^i(z_0,z_j,p))|\rightarrow\infty
 \enn
as $j\rightarrow\infty$ for $p\,\bot\,\nu(z_0)$. This is a
contradiction and therefore $S_{1}=\wi{S}_{1}$.

We now assume that the boundary conditions are different, that is,
$\mathscr{B}\neq\wi{\mathscr{B}}$. Since the obstacles and the far
field patterns are the same, we have $F(x,d,q)=\wi{F}(x,d,q)$.
Define $\Om_2:=\Om_2=\wi{\Om}_2$ and $F:=F(x,d,q)=\wi{F}(x,d,q)$ and
consider the case of impedance boundary conditions with two
different positive constants $\la\neq\wi{\la}$. Then from the
boundary conditions
 \be\label{twoipc}
 \nu\times G-\frac{\la}{k_{1}}(\nu\times F)\times\nu=0,\;\;
 \nu\times G-\frac{\wi{\la}}{k_{1}}(\nu\times F)\times\nu=0\qquad {\rm on}\;S_{1},
 \en
we observe that
 \ben (\la-\wi{\la})\nu\times F=0\qquad{\rm on}\;\;S_1.
 \enn
This, together with the boundary conditions (\ref{twoipc}), implies that
$\nu\times F=\nu\times G=0$ on $S_{1}$.
Therefore, by Holmgren's uniqueness theorem \cite{AN,Kress001}, $F=G=0$ in $\Om_{1}$.
Using Holmgren's uniquness theorem again and with the help of the transmission boundary
conditions, we conclude that $E(x,d,q)=E^{i}(x,d,q)+E^{s}(x,d,q)=0$ in $\Om_{0}$.
The scattered field $E^{s}(x,d,q)$ tends to zero uniformly at infinity,
while the incident plane wave $E^{i}(x,d,q)$ has modulus $|k_{0}(d\times q)\times d|$
everywhere. Thus the modulus of the total field tends to $|k_{0}(d\times q)\times d|$.
This leads to a contradiction, giving that $\la=\widetilde{\la}$.
The cases with other boundary conditions can be dealt with similarly.
\end{proof}

We summarize the main results for the inverse problem in the following theorem.

\begin{theorem}\label{th4.2}
Let $\Im k_{1}>0$. Then the interface $S_{0}$ and the obstacle
$\Om_{2}$ with its physical property $\mathscr{B}$ are uniquely
determined by the electric far field patterns
$E^{\infty}(\widehat{x},d,q)$ for all observation directions
$\widehat{x}\in S^2$, all incident directions $d\in S^2$ and all
polarizations $q\in \R^{3}$.
\end{theorem}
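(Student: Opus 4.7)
The plan is to observe that Theorem \ref{th4.2} is a synthesis of the two major uniqueness results already established in Sections \ref{sec3} and \ref{sec4}, and that its proof amounts to applying them in the correct order. Suppose two configurations $(S_0, \Omega_2, \mathscr{B})$ and $(\widetilde{S}_0, \widetilde{\Omega}_2, \widetilde{\mathscr{B}})$ produce identical electric far field patterns $E^\infty(\widehat{x}, d, q) = \widetilde{E}^\infty(\widehat{x}, d, q)$ for all $\widehat{x}, d \in S^2$ and $q \in \mathbb{R}^3$. The goal is to conclude that all three data agree.

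The first step is to determine the outer interface. Since $\Im k_1 > 0$ by hypothesis and the far field patterns coincide for all plane incoming waves, Lemma \ref{uniS0} applies directly and yields $S_0 = \widetilde{S}_0$. This reduces the problem to a scattering configuration in which the piecewise homogeneous background (and in particular the medium in $\Omega_1$) is common to both setups, so that the only remaining unknowns are the obstacle $\Omega_2$ (equivalently $S_1$) and its boundary operator $\mathscr{B}$.

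The second step is to apply Theorem \ref{th4.1}, whose hypotheses are now met: $S_0$ is fixed, the obstacles $\Omega_2, \widetilde{\Omega}_2 \subset \Omega$ sit inside the known layered medium, and the electric far field patterns agree for all $\widehat{x}, d, q$. Theorem \ref{th4.1} then gives both $S_1 = \widetilde{S}_1$ and $\mathscr{B} = \widetilde{\mathscr{B}}$. Combining these two steps establishes Theorem \ref{th4.2}.

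I do not anticipate any genuine obstacle at this stage. All heavy machinery — the mixed reciprocity relation of Lemma \ref{mix}, the H\"ahner-type singular source construction in Lemma \ref{uniS0} (and its supporting Lemma \ref{3.3}), and the point-source blow-up argument in Theorem \ref{th4.1} — has already been developed. The only point worth noting explicitly is that the hypothesis $\Im k_1 > 0$ is required for the first step (Lemma \ref{uniS0} and Lemma \ref{3.3} both invoke it), whereas the second step only needs $\Im k_1 \ge 0$; so the single assumption $\Im k_1 > 0$ of Theorem \ref{th4.2} is sufficient to drive both steps in succession.
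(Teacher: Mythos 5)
Your proposal is correct and matches the paper's intent exactly: the paper states Theorem \ref{th4.2} as a summary of Lemma \ref{uniS0} (which determines $S_0$ under $\Im k_1>0$) followed by Theorem \ref{th4.1} (which determines $S_1$ and $\mathscr{B}$ once $S_0$ is known), and offers no further argument. Your remark about where the hypothesis $\Im k_1>0$ is actually needed is accurate and consistent with the paper.
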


There is a widespread belief that the electric far field pattern for a single incident
direction $d\in S^{2}$ and a single polarization direction $q$ uniquely determines
the general obstacle, since the far field data depend on the same number of variables, as
does the obstacle to be recovered. However, this result remains a
challenging open problem \cite{CC05}. Recent progress has been made
by Liu, Zhang \& Zou \cite{LZZ09} who established uniqueness with a
single incident wave for a polyhedral obstacle and by Kress
\cite{Kress} who proved that a ball and its boundary condition (for
constant impedance $\la$) are uniquely determined by the far field
pattern for one incident plane wave. In a recent paper \cite{HLZ},
we proved uniqueness in determining a small perfectly conducting
ball in the inverse electromagnetic scattering problem by a finite
number of electric far field patterns with a single incident
direction and polarization. We now extend Kress's result to our case
that the background is a piecewise homogeneous medium.

\begin{corollary}\label{co4.1}
Let $\Im k_{1}>0$. Assume that the interface $S_{0}$ and the
boundary $S_{1}$ are concentric spheres with center at the origin
and the impedance $\la$ is a positive constant. Then they are
uniquely determined by the electric far field patterns
$E^{\infty}(\widehat{x},d,q)$ for all observation directions
$\widehat{x}\in S^2$, one fixed incident direction $d\in S^2$ and all
polarizations $q\in \R^{3}$.
\end{corollary}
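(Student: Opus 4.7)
The strategy is to reduce the single-direction data in Corollary~\ref{co4.1} to the full data of Theorem~\ref{th4.2} by exploiting the rotational symmetry of concentric spheres. Both the true configuration $(S_0, S_1, \lambda)$ and any second candidate $(\wi{S}_0, \wi{S}_1, \wi{\la})$ are invariant under every rotation $R \in SO(3)$ about the origin, and the impedance $\la$ (being a constant) is also trivially invariant. Since the only additional structure is the wave numbers $k_0, k_1$ and the constants $\la_E, \la_H$ (all scalar), the entire boundary value problem $(\ref{0EH})$--$(\ref{0ibc})$ commutes with such rotations.

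First I would establish the following symmetry relation for the electric far field pattern: for every $R \in SO(3)$,
\begin{equation*}
E^{\infty}(\widehat{x}, Rd, Rq) \;=\; R\,E^{\infty}(R^{-1}\widehat{x}, d, q), \qquad \widehat{x}, d \in S^2,\; q \in \R^3.
\end{equation*}
To see this, set $\check{E}(x) := R^{-1} E(Rx, Rd, Rq)$ and $\check{H}(x) := R^{-1} H(Rx, Rd, Rq)$, and define $\check{F}, \check{G}$ analogously. Because $R$ preserves both the dot and cross products, the incident plane wave satisfies $E^i(Rx, Rd, Rq) = R\,E^i(x, d, q)$ (and similarly for $H^i$), and the Maxwell equations, the Silver--M\"uller radiation condition, the transmission conditions on $S_0 = RS_0$, and the mixed conditions on $S_1 = RS_1$ (with the unit normal transforming as $\nu(Rx) = R\nu(x)$) are all preserved. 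Theorem~\ref{well-posedness} then forces $(\check{E}, \check{H}, \check{F}, \check{G}) = (E, H, F, G)$ for incident parameters $(d, q)$, and extracting the leading $e^{ik_0|x|}/|x|$ coefficient gives the displayed identity.

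Next, given any target incident direction $d \in S^2$, choose a rotation $R \in SO(3)$ with $R d_0 = d$ (where $d_0$ is the fixed direction in the hypothesis). For arbitrary $\widehat{x} \in S^2$ and $q \in \R^3$, write $q = Rq'$ with $q' = R^{-1}q$ and compute, using the symmetry relation twice (once for each structure) and the hypothesis $E^{\infty}(\cdot, d_0, \cdot) = \wi{E}^{\infty}(\cdot, d_0, \cdot)$,
\begin{equation*}
E^{\infty}(\widehat{x}, d, q) \;=\; R\,E^{\infty}(R^{-1}\widehat{x}, d_0, R^{-1}q) \;=\; R\,\wi{E}^{\infty}(R^{-1}\widehat{x}, d_0, R^{-1}q) \;=\; \wi{E}^{\infty}(\widehat{x}, d, q).
\end{equation*}
Hence the far field patterns of the two configurations coincide for \emph{all} observation directions, \emph{all} incident directions, and all polarizations.

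Finally, Theorem~\ref{th4.2} is directly applicable and yields $S_0 = \wi{S}_0$, $S_1 = \wi{S}_1$, and $\la = \wi{\la}$, completing the proof. The only substantive step is the verification of the rotational covariance of the scattering problem; this is conceptually straightforward but depends crucially on the constancy of $\la$ (so that the impedance condition $\nu \times G - (\la/k_1)(\nu \times F)\times \nu = 0$ is rotation-invariant) and on the centering of both spheres at the origin, which together ensure that rotations map admissible quintuples of the inhomogeneous problem to admissible quintuples of the same problem.
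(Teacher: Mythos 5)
Your proposal is correct and follows essentially the same route as the paper: both use the rotational covariance $E^{\infty}(Q\widehat{x},Qd,Qq)=Q\,E^{\infty}(\widehat{x},d,q)$ of the far field pattern for origin-centered concentric spheres with constant impedance to upgrade the single-incident-direction data to all incident directions, and then invoke Theorem~\ref{th4.2}. The only difference is that you spell out the verification of the covariance via uniqueness of the direct problem, which the paper simply asserts ``by symmetry.''
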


\begin{proof}
By symmetry, the electric far field pattern for scattering of plane
waves by the concentric spheres $S_{0}$ and $S_{1}$ and the positive
constant impedance $\la$ satisfies $E^{\infty}(Q\widehat{x},Qd,Qq)=Q
E^{\infty}(\widehat{x},d,q)$ for all $\widehat{x},d\in S^2$, $q\in
\R^{3}$ and all rotations $Q$, that is, for all orthogonal
transformations with $\det Q=1$. Hence, knowledge of the electric
far field pattern for one incident direction implies knowledge of
the electric far field pattern for all incident directions. The
statement now follows from Theorem \ref{th4.2}.
\end{proof}

Karp's theorem in our case as stated in the following corollary is also
true.

\begin{corollary}\label{co4.2}
Let $\Im k_{1}>0$. Assume that the electric far field patterns
$E^{\infty}(\widehat{x},d,q)$ satisfies
$E^{\infty}(Q\widehat{x},Qd,Qq)=Q E^{\infty}(\widehat{x},d,q)$ for
all $\widehat{x},d\in S^2$, $q\in \R^{3}$ and all orthogonal
transformations $Q$ with $\det Q=1$. Then the interface $S_{0}$ and
the boundary $S_{1}$ are concentric spheres with center at the origin.
% and the impedance $\la$ is a positive constant.
\end{corollary}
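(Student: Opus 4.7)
The plan is to exploit the rotation invariance of Maxwell's equations together with Theorem~\ref{th4.2} to promote the symmetry of the far field data into symmetry of the scatterer. First I would recall the standard fact that if $E,H,F,G$ solve the transmission problem $(\ref{0EH})$--$(\ref{0ibc})$ in the configuration $(S_0,S_1,\Omega_2)$ with incident wave $E^i(\cdot,d,q)$, $H^i(\cdot,d,q)$, then for any orthogonal $Q$ with $\det Q=1$ the pullbacks $x\mapsto Q^{T}E(Qx,d,q),\;Q^{T}H(Qx,d,q),\ldots$ again satisfy Maxwell's equations (the identity $\operatorname{curl}(Q^{T}u(Q\cdot))=Q^{T}(\operatorname{curl} u)(Q\cdot)$ is used here, valid because $\det Q=1$). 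Using the identity $Q^{T}(a\times b)=Q^{T}a\times Q^{T}b$ one checks that $Q^{T}E^{i}(Qx,d,q)=E^{i}(x,Q^{T}d,Q^{T}q)$, so this pullback solves exactly the scattering problem in the \emph{rotated} configuration $(Q^{T}S_0,Q^{T}S_1,Q^{T}\Omega_2)$ with incident data $(Q^{T}d,Q^{T}q)$.

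Next I would translate this equivariance into a relation between the far field patterns. Writing $E^{\infty}_{Q}$ for the electric far field pattern of the rotated configuration $(QS_0,QS_1,Q\Omega_2)$, the computation above, together with the asymptotic form $(\ref{0ab})$, yields
\begin{equation*}
E^{\infty}_{Q}(Q\widehat{x},Qd,Qq)=Q\,E^{\infty}(\widehat{x},d,q),\qquad\widehat{x},d\in S^{2},\;q\in\R^{3}.
\end{equation*}
Here I use that the piecewise homogeneous medium and the constant impedance $\lambda$ are preserved by rotation, so the rotated configuration is of the same type; since $\det Q=1$ the outer normal and the operator $\mathscr{B}$ transform covariantly, so the boundary conditions $(\ref{0pbc})$--$(\ref{0ibc})$ are respected. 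Comparing with the standing hypothesis $E^{\infty}(Q\widehat{x},Qd,Qq)=Q\,E^{\infty}(\widehat{x},d,q)$, I conclude $E^{\infty}_{Q}=E^{\infty}$ for every $Q\in SO(3)$.

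At this point Theorem~\ref{th4.2} applies to the two configurations $(S_0,S_1)$ and $(QS_0,QS_1)$, which share the same electric far field patterns for all $\widehat{x},d\in S^{2}$ and all $q\in\R^{3}$. The theorem yields $QS_0=S_0$ and $QS_1=S_1$ for every $Q\in SO(3)$. Hence $S_0$ and $S_1$ are two connected, closed $C^{2}$ surfaces invariant under the full rotation group; the only such surfaces in $\R^{3}$ are spheres centred at the origin, and the proof is complete.

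The only delicate point I foresee is the verification that the reflected configuration truly falls within the class of scatterers covered by Theorem~\ref{th4.2}, i.e.\ that the wave numbers, the coefficients $\lambda_E,\lambda_H$, the impedance constant $\lambda$ and the decomposition $S_1=\overline{\Gamma}_1\cup\overline{\Gamma}_2$ are preserved under the rotation. The first three are scalar constants and therefore trivially invariant; for the dissection of $S_1$ the conclusion $QS_1=S_1$ forces the rotated dissection to coincide with the original, and this is built into the uniqueness statement by applying Theorem~\ref{th4.2} to the pair. Once this bookkeeping is in place the argument is straightforward.
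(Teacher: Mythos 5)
Your proposal is correct and follows essentially the same route as the paper: rotate the configuration, use the equivariance of the scattering problem to show the rotated scatterer produces the same far field pattern, invoke Theorem \ref{th4.2} to conclude $QS_0=S_0$ and $QS_1=S_1$ for every rotation $Q$, and deduce that the surfaces are origin-centred spheres. The paper compresses the equivariance step into the phrase ``by symmetry,'' which you spell out explicitly; the only slip is the word ``reflected'' in your final paragraph, which should read ``rotated.''
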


\begin{proof}
We define $\wi{S}_{0}=Q (S_{0})$ and $\wi{S}_{1}=Q (S_{1}) $
%and $\wi{\la}(x)=\la(Q^{-1}x)$
for some orthogonal transformation $Q$ with $\det Q=1$. Then, by
symmetry the corresponding electric far field pattern
$\wi{E}^{\infty}(\widehat{x},d,q)$ for $\wi{S}_{0},\;\wi{S}_{1}$ satisfies
 \ben
 \wi{E}^{\infty}(\widehat{x},d,q)=QE^{\infty}(Q^{-1}\widehat{x},Q^{-1}d,Q^{-1}q)
 =E^{\infty}(\widehat{x},d,q)
 \enn
for all $\widehat{x},d\in S^2$, $q\in \R^{3}$. Theorem \ref{th4.2} implies
that $\wi{S}_{0}=S_{0}$ and $\wi{S}_{1}=S_{1}$. This
holds for all orthogonal transformations $Q$. Therefore $S_{0}$ and
$S_{1}$ are concentric spheres with center at the origin.
\end{proof}

\section*{Acknowledgements}
This work was supported by the NNSF of China under grant No. 10671201.
%\newpage

\end{document}